\documentclass[11pt,a4paper]{article}

\usepackage[T1]{fontenc}
\usepackage[utf8]{inputenc}
\usepackage{lmodern}
\usepackage[margin=1in]{geometry}
\usepackage{amsmath,amssymb,amsfonts,amsthm,mathtools}
\usepackage{microtype}
\usepackage{enumitem}
\usepackage{xcolor}
\usepackage{hyperref}

\theoremstyle{plain}
\newtheorem{theorem}{Theorem}[section]
\newtheorem{proposition}[theorem]{Proposition}
\newtheorem{lemma}[theorem]{Lemma}
\newtheorem{corollary}[theorem]{Corollary}

\theoremstyle{definition}
\newtheorem{definition}[theorem]{Definition}

\newtheorem{question}[theorem]{Question}

\theoremstyle{remark}

\newcommand{\acyc}{\vec{\alpha}}
\newcommand{\dich}{\vec{\chi}}
\newcommand{\amin}{\vec a}
\newcommand{\tmax}{\vec t}
\newcommand{\Pp}{\mathbb{P}}
\newcommand{\Ee}{\mathbb{E}}
\newcommand{\NN}{\mathbb{N}}
\newcommand{\eps}{\varepsilon}
\newcommand{\fas}{\operatorname{fas}}

\newcommand{\hbin}{h_2}

\title{Feedback-arc robustness in random orientations of pseudorandom triangle-free graphs}

\author{Hui Lei\thanks{School of Statistics and Data Science, LPMC and KLMDASR,
    Nankai University. {\tt hlei@nankai.edu.cn}.
    Funded by the National Natural Science Foundation of China
    (Nos.\,12371351, 12431013),
    the Natural Science Foundation of Tianjin (24JCYBJC01670),
    and the Fundamental Research Funds for the Central Universities,
    Nankai University(63263098).}\and Danning Wang\thanks{School of Mathematics, Statistics and Mechanics, Beijing University of Technology, Beijing, China. Email: \texttt{wangdanning1113@163.com}.}
\and Yiqiao Wang\thanks{School of Mathematics, Statistics and Mechanics, Beijing University of Technology, Beijing, China. Email: \texttt{yqwang@bjut.edu.cn}. Funded by the National Natural Science Foundation of China
    (Nos.\,12422113).}}

\date{}

\begin{document}
\maketitle

\begin{abstract}
For an oriented graph $D$, let $\acyc(D)$ be the maximum order of an induced acyclic subdigraph, $\dich(D)$ its dichromatic number, and  $\fas(D)$  the minimum number of arcs whose deletion makes $D$ acyclic. We prove that for every fixed $\zeta\in(0,1/2)$, there are triangle-free graphs $G_n$ on $n$ vertices such that a uniformly random orientation $D_n$ satisfies,
\[
 \left(\frac12-\zeta\right)e(G_n[U])
 <\fas(D_n[U])\le \frac12e(G_n[U])
\] with probability at least $1-\exp\!\left[-\Omega_\zeta\!\left(\sqrt n\,(\log n)^{3/2}\right)\right]$ simultaneously for every vertex set $U$ of size at least $C_\zeta\sqrt{n\log n}$. The upper bound is universal, so the feedback-arc ratio can be made arbitrarily close to the largest possible value, uniformly over all sufficiently large induced subdigraphs.

In particular,
$\acyc(D_n)=O(\sqrt{n\log n})$,
and every linear-size induced subdigraph has dichromatic number $\Omega(\sqrt{n/\log n})$. This yields
$\amin(n)=\Theta(\sqrt{n\log n})$ and $\tmax(n)=\Theta\!\left(\sqrt{\frac n{\log n}}\right)$, where $\amin(n)$ and $\tmax(n)$  denote, respectively,  the
minimum of $\acyc(D)$ and the maximum of $\dich(D)$ over all oriented triangle-free graphs $D$ of
order $n$.
This confirms two conjectures of Aboulker, Havet, Pirot, and Schabanel.

We also prove a robust oriented version of the Guo--Warnke packing theorem. Every sufficiently lower-uniform host graph admits an approximate edge-decomposition into $\Theta(\sqrt{n/\log n})$ spanning triangle-free graphs whose independent uniform orientations simultaneously satisfy the same near-maximal feedback-arc property in all sufficiently large induced subgraphs. Consequently, almost all edges of $K_n$, and of every fixed-density binomial random graph, can be packed into edge-disjoint oriented triangle-free graphs with hereditary feedback-arc robustness and vertex-robust asymptotically optimal dichromatic number.

\end{abstract}

\section{Introduction}

An \emph{oriented graph} is an orientation of a finite simple graph. A vertex set in an oriented graph $D$ is \emph{acyclic} if it induces no directed cycle. The \emph{acyclic number} $\acyc(D)$ is the maximum size of an acyclic vertex set, and the \emph{dichromatic number} $\dich(D)$ is the minimum number of acyclic sets in a partition of $V(D)$; see Neumann-Lara~\cite{NeumannLara}. Obviously, we have $\dich(D)\ge \frac{|V(D)|}{\acyc(D)}$.
The underlying graph of $D$ is denoted by $U(D)$. We call $D$ \emph{triangle-free} when $U(D)$ is triangle-free.

We also measure how far an orientation is from acyclic. The \emph{feedback arc set number} $\fas(D)$ is the minimum number of arcs whose deletion makes $D$ acyclic. Equivalently, $\fas(D)$ is the minimum number of arcs that must be reversed to obtain an acyclic orientation of $U(D)$; see Proposition~\ref{prop:fas-orders}. Every orientation with $m$ arcs satisfies $\fas(D)\le m/2$, so a lower bound of the form $\fas(D)\ge(1/2-o(1))m$ is asymptotically best possible.

For $n\in\NN$, Aboulker, Havet, Pirot, and Schabanel~\cite{AboulkerHPS} introduced the extremal functions
\[
 \amin(n):=\min\{\acyc(D):D\text{ is an $n$-vertex oriented triangle-free graph}\}
\]
and
\[
 \tmax(n):=\max\{\dich(D):D\text{ is an $n$-vertex oriented triangle-free graph}\}.
\]
Every independent set of $U(D)$ is acyclic in $D$. Hence the inverse Ramsey bound of Ajtai, Koml\'os, and Szemer\'edi~\cite{AKS}, sharpened by Shearer~\cite{Shearer}, gives $\amin(n)=\Omega(\sqrt{n\log n})$. The complementary Ramsey constructions were developed through the work of Kim~\cite{Kim} and the triangle-free process analyses of Fiz Pontiveros, Griffiths, and Morris~\cite{FizGriffithsMorris} and Bohman and Keevash~\cite{BohmanKeevash}. Aboulker et al.~\cite{AboulkerHPS} proved $\amin(n)=O(\sqrt n\log n)$ and conjectured that the remaining factor $\sqrt{\log n}$ can be removed. They also made the corresponding conjecture for $\tmax(n)$.

Their orientation argument starts from the observation that every $n$-vertex graph has an orientation in which each acyclic set spans average degree at most $2\log_2 n+1$; see~\cite[Lemma~2.4]{AboulkerHPS}. For the pseudorandom triangle-free Ramsey graphs of Guo and Warnke~\cite{GuoWarnke}, our main theorem gives a much stronger conclusion: not only is every large set non-acyclic, but every large induced orientation has feedback arc set number within an arbitrarily small multiplicative error of half the total number of arcs.

\begin{theorem}\label{thm:single}
For every $\zeta\in(0,1/2)$ there are constants $C,c>0$ such that, for every sufficiently large $n$, there are an $n$-vertex triangle-free graph $G_n$ and an integer $k_n\le C\sqrt{n\log n}$
with the following property. If $D_n$ is a uniformly random orientation of $G_n$, then
\[ \Pp\!\left(
   \exists U\subseteq V(G_n),\ |U|\ge k_n:\
   \fas(D_n[U])\le\left(\frac12-\zeta\right)e(G_n[U])
 \right)
 \le \exp\!\left[-c\sqrt n\,(\log n)^{3/2}\right].\]
On the complementary event,
\begin{enumerate}[label=\textnormal{(\roman*)}]
\item $\acyc(D_n)<k_n$;
\item every $W\subseteq V(G_n)$ with $|W|\ge k_n$ satisfies
$\dich(D_n[W])\ge \frac{|W|}{k_n-1}$;
\item one needs to perform at least $c\sqrt n\,(\log n)^{3/2}$ arc edits---each edit deleting or reversing one arc---before an acyclic set of order $k_n$ can be created. In particular, every digraph obtained using fewer edits still has acyclic number less than $k_n$ and dichromatic number at least $n/(k_n-1)$.
\end{enumerate}
\end{theorem}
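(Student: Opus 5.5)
The plan is to take $G_n$ to be a Guo--Warnke pseudorandom triangle-free graph: $n$ vertices, edge density $p=\Theta(\sqrt{\log n/n})$, and \emph{lower-uniform}, meaning every set $U$ with $|U|\ge k_n=C\sqrt{n\log n}$ spans $e(G_n[U])\ge (1-\delta)p\binom{|U|}{2}$ edges, where $\delta=\delta(\zeta)$ is small. With $|U|=k$ this gives $e(G_n[U])=\Omega(k^2 p)$, which at $k=k_n$ is $\Omega(C^2\sqrt n(\log n)^{3/2})$. The probability bound then comes from a union bound over $U$ combined with a union bound over the linear orders that witness a small feedback arc set.

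By Proposition~\ref{prop:fas-orders}, $\fas(D[U])\le(\frac12-\zeta)m$ with $m=e(G_n[U])$ holds exactly when some linear order $\sigma$ of $U$ has at most $(\frac12-\zeta)m$ backward arcs. Fix $U$ and $\sigma$. Each edge is backward independently with probability $1/2$, so Hoeffding bounds the probability of this event by $\exp(-2\zeta^2 m)$. There are at most $\binom{n}{k}k!\le n^{2k}=\exp(2k\log n)$ pairs $(U,\sigma)$ with $|U|=k$. Since $m\ge c_0 k^2p=c_0k^2\sqrt{\log n/n}$, we get $2\zeta^2m\ge 4k\log n$ as soon as $k\ge (2/(\zeta^2c_0))\sqrt{n\log n}$. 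This is where $C=C_\zeta$ is chosen. For each such $k$ the sum is then at most $\exp(-\zeta^2 m)\le\exp(-c k^2\sqrt{\log n/n})$, and summing over $k\ge k_n$ gives $\exp[-c\sqrt n(\log n)^{3/2}]$.

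The main obstacle is that $\sigma$ ranges over $k!$ orders, which costs $k\log k$ in the exponent, while $m$ is only of order $k\sqrt{\log n}\cdot(k/\sqrt{n})$. The balance works only because $k\gtrsim\sqrt{n\log n}$, and it needs the constant from lower-uniformity to hold uniformly for all sets of size at least $k_n$. If the available Guo--Warnke statement gives only $e(G[U])\ge c_0 p|U|^2$ at sizes $\ge C'\sqrt{n\log n}$, I enlarge $C$ accordingly; everything above only needs a constant $c_0>0$, not $1-\delta$. The upper bound $\fas\le m/2$ is the universal statement quoted in the introduction.

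The consequences on the complementary event then follow directly.
\begin{enumerate}[label=\textnormal{(\roman*)}]
\item An acyclic $U$ has $\fas(D[U])=0$. If $|U|\ge k_n$, then $e(G_n[U])>0$ and $0\le(\frac12-\zeta)e$, which is excluded. Hence $\acyc(D_n)<k_n$.
\item Each color class of an acyclic partition of $W$ has fewer than $k_n$ vertices, so there are at least $|W|/(k_n-1)$ classes.
\item Suppose $s$ deletions or reversals create an acyclic set $U$ with $|U|=k_n$. Then $\fas(D_n[U])\le s$. Lower-uniformity gives $e(G_n[U])\ge c_1\sqrt n(\log n)^{3/2}$, so the event forces $s>(\frac12-\zeta)c_1\sqrt n(\log n)^{3/2}$. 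Any digraph reached with fewer edits therefore still has acyclic number below $k_n$, and (ii) applied to it gives dichromatic number at least $n/(k_n-1)$.
\end{enumerate}
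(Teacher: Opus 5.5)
Your proposal is correct and follows essentially the same route as the paper: a Guo--Warnke graph, a lower bound on induced density for all sets of size at least $k_n$, and a union bound over vertex sets and orderings against a binomial lower tail. Using Hoeffding's $\exp(-2\zeta^2 m)$ in place of the paper's entropy bound $2^{-[1-\hbin(1/2-\zeta)]m}$ changes only constants.

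The one step you take for granted is that Guo--Warnke directly gives $e(G_n[U])\ge(1-\delta)p\binom{|U|}{2}$ for every $|U|\ge k_n$. Their theorem only controls $e_G(A,B)$ for disjoint pairs at the single scale $s$. The paper bridges this gap with the averaging Lemma~\ref{lem:cross-to-induced}, which takes a uniformly random pair of disjoint $s$-subsets of $U$. Some such argument is needed, because a single pair inside $U$ yields only $\Omega(p s^2)$ edges, which is too few for your union bound once $|U|$ is large.
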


The last assertion is a global arc-edit robustness statement. In particular, after deleting any fixed positive proportion of the vertices, the remaining oriented graph still has dichromatic number of order $\sqrt{n/\log n}$.

Theorem~\ref{thm:single} immediately settles the two extremal conjectures.

\begin{theorem}\label{thm:extremal}
There are absolute constants $c,C>0$ such that, for all sufficiently large $n$,
\[
 c\sqrt{n\log n}\le \amin(n)\le C\sqrt{n\log n}
\]
and
\[
 c\sqrt{\frac n{\log n}}\le \tmax(n)\le C\sqrt{\frac n{\log n}}.
\]
Equivalently,
\[
    \amin(n)=\Theta(\sqrt{n\log n})
    \quad\text{and}\quad
    \tmax(n)=\Theta\!\left(\sqrt{\frac n{\log n}}\right).
\]
\end{theorem}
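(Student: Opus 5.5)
The proof has four short steps: two lower bounds from independent sets, and two upper bounds read off from Theorem~\ref{thm:single}.

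\textbf{Lower bound on $\amin(n)$.} Let $D$ be any $n$-vertex oriented triangle-free graph. Every independent set of $U(D)$ spans no arcs, so it is acyclic in $D$; hence $\acyc(D)\ge\alpha(U(D))$. By Shearer's bound~\cite{Shearer}, every triangle-free graph with average degree $d$ has
\[
\alpha\ge (1+o(1))\frac{n\log d}{d}.
\]
Combined with the trivial bound that neighbourhoods are independent, so $\alpha\ge\Delta\ge d$, this gives $\alpha(U(D))\ge c\sqrt{n\log n}$. The case analysis is routine: if $d\le\sqrt{n\log n}$, Shearer's bound already gives $\Omega(\sqrt{n\log n})$; otherwise $\alpha\ge d\ge\sqrt{n\log n}$.

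\textbf{Upper bound on $\tmax(n)$.} By the definition of the dichromatic number, $V(D)$ can be partitioned into acyclic sets. So we colour $D$ greedily: repeatedly remove a largest independent set of $U(D)$ from the remaining graph. Each removed set is acyclic, and the remaining induced subgraph is still triangle-free, so the same Shearer-type bound applies at every stage. While $m$ vertices remain, we therefore remove at least $c\sqrt{m\log m}$ of them. Summing this dyadically (the standard argument that triangle-free graphs have chromatic number $O(\sqrt{n/\log n})$) gives $\dich(D)\le\chi(U(D))\le C\sqrt{n/\log n}$. The only care needed is with small $m$, where we simply use $m/\sqrt{m\log m}$ per halving; the geometric sum is dominated by the first term.

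\textbf{Upper bound on $\amin(n)$ and lower bound on $\tmax(n)$.} These come from Theorem~\ref{thm:single}, with any fixed $\zeta$, say $\zeta=1/4$. For large $n$, the failure probability is at most $\exp[-c\sqrt n(\log n)^{3/2}]<1$, so the complementary event has positive probability. Hence some orientation $D_n$ of the triangle-free graph $G_n$ satisfies (i) and (ii). By (i), $\amin(n)\le\acyc(D_n)<k_n\le C\sqrt{n\log n}$. Applying (ii) with $W=V(G_n)$ gives
\[
\tmax(n)\ge\dich(D_n)\ge \frac{n}{k_n-1}\ge \frac{n}{C\sqrt{n\log n}}=\frac1C\sqrt{\frac{n}{\log n}}.
\]
Equivalently, this follows from $\dich\ge|V|/\acyc$ together with (i).

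\textbf{Main obstacle.} There is no real obstacle; all the difficulty sits in Theorem~\ref{thm:single}. The only step needing care is the greedy colouring bound for $\tmax$, where the Shearer bound must hold uniformly for small residual graphs. I would handle this with the standard dyadic summation, or cite the known bound $\chi=O(\sqrt{n/\log n})$ for triangle-free graphs directly.
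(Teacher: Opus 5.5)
Your proposal is correct and follows the paper's proof essentially step for step: the lower bound on $\amin(n)$ comes from independent sets plus Shearer, the upper bound on $\tmax(n)$ from greedily removing maximum independent sets with a dyadic summation (the paper's Lemma~\ref{lem:delete-independent}), and the remaining two bounds are read off from Theorem~\ref{thm:single} via $\dich(D_n)\ge n/\acyc(D_n)$. The only cosmetic difference is that you derive the $\Omega(\sqrt{n\log n})$ independence bound from Shearer's average-degree form together with $\alpha\ge\Delta\ge d$, whereas the paper quotes it directly in the $n$-vertex form (Theorem~\ref{thm:inverse-ramsey}).
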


The graph $G_n$ comes from the pseudorandom triangle-free subgraph theorem of Guo and Warnke~\cite{GuoWarnke}. Their theorem constructs, inside a dense host graph, a triangle-free graph whose edge distribution between all sufficiently large disjoint sets resembles that of a binomial random subgraph. The orientation step is an entropy argument. For a fixed ordering of the vertices, the number of backward arcs in a uniform random orientation follows a binomial distribution $\operatorname{Bin}(m,1/2)$. A union bound over vertex sets and orderings proves that every large induced subdigraph has nearly $m/2$ backward arcs under every ordering.

The same argument preserves the packing feature of the Guo--Warnke construction. For disjoint vertex sets $A,B$ in a graph $H$, let $e_H(A,B)$ denote the number of edges with one endpoint in each set.

\begin{definition}\label{def:lower-uniform}
For $\xi>0$ and an integer $s\ge1$, an $n$-vertex graph $H$ is \emph{$(\xi,s)$-lower-uniform} if
$e_H(A,B)\ge \xi |A||B|$
for every pair of disjoint sets $A,B\subseteq V(H)$ with $|A|=|B|=s$.
\end{definition}

\begin{theorem}\label{thm:packing}
For every $\eps,\xi,C_0>0$ and $\zeta\in(0,1/2)$ there are constants $C,R,c>0$ and $n_0$ such that the following holds for every $n\ge n_0$. Let $t=\left\lceil C_0\sqrt{n\log n}\right\rceil$,
and let $H$ be an $n$-vertex $(\xi,t)$-lower-uniform graph. Then $H$ contains an edge-disjoint collection
$G_0,\ldots,G_{r-1}$
of spanning triangle-free graphs, where
$r=\left\lceil R\sqrt{\frac n{\log n}}\right\rceil$,
such that
\[\sum_{i=0}^{r-1}e(G_i)\ge(1-\eps)e(H).\]
There is an integer $k_n\le C\sqrt{n\log n}$ such that, if the edges of the $G_i$ are oriented independently and uniformly, producing $D_i$, then
\[
\begin{aligned}
&\Pp\!\left(\exists i<r,\ \exists U\subseteq V(H),\ |U|\ge k_n:\ \fas(D_i[U])\le\left(\tfrac12-\zeta\right)e(G_i[U])\right)\\
&\quad\le \exp\!\left[-c\sqrt n\,(\log n)^{3/2}\right].
\end{aligned}
\]
Consequently, outside this exceptional event, every $i<r$ and every $W\subseteq V(H)$ with $|W|\ge k_n$ satisfy
$\dich(D_i[W])\ge \frac{|W|}{k_n-1}$.
\end{theorem}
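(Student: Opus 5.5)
We plan to derive Theorem~\ref{thm:packing} from the Guo--Warnke packing theorem, combined with the same entropy/union-bound argument that proves Theorem~\ref{thm:single}.

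\emph{Step 1: the packing.} Apply the Guo--Warnke pseudorandom triangle-free packing theorem to the $(\xi,t)$-lower-uniform host $H$. It yields edge-disjoint spanning triangle-free graphs $G_0,\dots,G_{r-1}$ with $r=\lceil R\sqrt{n/\log n}\rceil$, covering at least $(1-\eps)e(H)$ edges. Each $G_i$ is lower-pseudorandom at density $p\asymp\sqrt{\log n/n}$: for all disjoint $A,B$ with $|A|=|B|\ge s$, where $s=\Theta(\sqrt{n\log n})$, we have $e_{G_i}(A,B)\ge \gamma p|A||B|$ for some $\gamma=\gamma(\xi,\eps)>0$. We also use the trivial upper bound $e(G_i[U])\le \binom{|U|}{2}$. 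Choose $k_n=Ks$ with $K=K(\zeta,\gamma)$ large. Partitioning an ordering of $U$ into $K$ consecutive blocks then shows that $e(G_i[U])\ge \gamma' p|U|^2$ whenever $|U|\ge k_n$.

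\emph{Step 2: fix $i$, $U$ and a linear order $\sigma$ of $U$.} Let $m=e(G_i[U])$. Under a uniform orientation, the number of arcs that are backward with respect to $\sigma$ is distributed as $\operatorname{Bin}(m,1/2)$. By Proposition~\ref{prop:fas-orders}, $\fas(D_i[U])\le(1/2-\zeta)m$ holds exactly when some $\sigma$ has at most $(1/2-\zeta)m$ backward arcs. By Chernoff, each fixed $\sigma$ has this property with probability at most $e^{-2\zeta^2 m}$.

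\emph{Step 3: the union bound, which is the main obstacle.} For $|U|=u$ there are at most $\binom nu u!\le n^{u}\cdot u^u\le e^{2u\log n}$ choices of the pair $(U,\sigma)$. The Chernoff exponent is at least $2\zeta^2\gamma' p u^2\asymp \zeta^2\gamma' u^2\sqrt{\log n/n}$. This dominates $2u\log n$ once $u\ge C'\sqrt{n\log n}$ with $C'$ large in terms of $\zeta,\gamma'$; this fixes $C$. The resulting bound for each $u$ is then $\exp[-\Omega(u^2\sqrt{\log n/n})]$. Summing over $u\ge k_n$ and over the $r\le n$ indices $i$ gives a total of at most $\exp[-c\,k_n^2\sqrt{\log n/n}]=\exp[-c\sqrt n(\log n)^{3/2}]$, since $k_n^2\sqrt{\log n/n}\asymp \sqrt n(\log n)^{3/2}$.

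The delicate point is that counting all orderings costs $u\log u$, which is of the same order as the entropy of choosing $U$. Both must be beaten by a density of order $p u^2$, and the constants have to be calibrated through the lower-uniformity parameter $\gamma$ coming out of Guo--Warnke. I would first verify that their theorem really provides lower edge counts at scale $s=\Theta(\sqrt{n\log n})$ for every $G_i$ simultaneously, rather than only on average.

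\emph{Step 4: the corollary.} Outside the exceptional event, every $U$ with $|U|\ge k_n$ has $\fas(D_i[U])>0$, since $e(G_i[U])>0$ by Step 1. Hence no acyclic set has $k_n$ vertices, so every acyclic subset of $W$ has size at most $k_n-1$. It follows that $\dich(D_i[W])\ge |W|/(k_n-1)$.
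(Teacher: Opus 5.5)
Your Steps 2--4 coincide with the paper's Lemma~\ref{lem:cross-to-induced} and Theorem~\ref{thm:transfer}, up to cosmetic changes: Hoeffding's $e^{-2\zeta^2m}$ replaces the entropy bound $2^{-[1-\hbin(1/2-\zeta)]m}$, and you enlarge $k_n=Ks$ where the paper instead enlarges the Guo--Warnke scale $L$. The gap is Step~1, which is exactly the point you flag and leave open. Your union bound needs $e_{G_i}(A,B)=\Omega(\rho s^2)$, with $\rho\asymp\sqrt{\log n/n}$, for \emph{every} $G_i$ and \emph{every} pair of disjoint $s$-sets.

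Suppose the packing statement you invoke certifies only $\alpha(G_i)<C\sqrt{n\log n}$, which is what Ramsey applications need. Then it is too weak. That property is compatible with some $u$-set, $u=\Theta(\sqrt{n\log n})$, spanning only $O(\sqrt{n\log n})$ edges (plant disjoint copies of $C_5$ on $U$). This is far below the $\Omega(u\log n)=\Omega(\sqrt n(\log n)^{3/2})$ edges needed to pay for the choices of $(U,\sigma)$. The paper does not quote a packing theorem; it extracts the per-$G_i$ density from the construction.

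Concretely, the paper starts from $H_0=H$ and applies Theorem~\ref{thm:GW} to $H_i$ with $\delta=1/4$ and $\gamma=\eps^2\xi$. It then sets $H_{i+1}=H_i\setminus G_i$ and runs $I=\lceil\log(1/\eps)/((1-\delta)\rho)\rceil$ rounds. The two-sided conclusion $e_{G_i}(A,B)=(1\pm\delta)\rho\,e_{H_i}(A,B)$ propagates the invariant
$(1-(1+\delta)\rho)^ie_H(A,B)\le e_{H_i}(A,B)\le(1-(1-\delta)\rho)^ie_H(A,B)$
for all disjoint $s$-sets. Since $(1+2\delta)/(1-\delta)=2$, the lower half gives $e_{H_i}(A,B)\ge\eps^2e_H(A,B)\ge\eps^2\xi s^2$ for all $i<I$. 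Here Lemma~\ref{lem:scale-lift} is used to lift lower-uniformity from scale $t$ to $s\ge t$.

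So the hypothesis of Theorem~\ref{thm:GW} holds for every pair in every round, which is what keeps the invariant alive. One then gets $e_{G_i}(A,B)\ge(1-\delta)\rho\,\eps^2\xi s^2$, precisely the input your Steps 2--4 require. The upper half of the invariant gives $e_{H_I}(A,B)\le\eps e_H(A,B)$, which Lemma~\ref{lem:edge-double-count} converts into $\sum_ie(G_i)\ge(1-\eps)e(H)$. The threshold must be $\eps^2\xi$ rather than $\eps\xi$ because the remainder may decay at the faster rate $(1+\delta)\rho$, while the number of rounds is calibrated to the slower rate $(1-\delta)\rho$.
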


Taking $H=K_n$ (the complete graph on
$n$ vertices) gives an approximate decomposition into oriented Ramsey graphs that are hereditarily far from acyclic.

\begin{corollary}\label{cor:complete-packing}
For every $\eps>0$ and $\zeta\in(0,1/2)$, all sufficiently large complete graphs $K_n$ contain $\Theta_{\eps,\zeta}(\sqrt{n/\log n})$ edge-disjoint spanning triangle-free graphs covering all but at most $\eps\binom n2$ edges, such that independent uniform orientations simultaneously satisfy the feedback-arc and hereditary dichromatic conclusions of Theorem~\ref{thm:packing}.
\end{corollary}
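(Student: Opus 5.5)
The plan is to apply Theorem~\ref{thm:packing} with host graph $H=K_n$ and then check the two quantitative claims: the number of pieces, and the number of uncovered edges.

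\textbf{Applying the theorem.} Fix $\eps>0$ and $\zeta\in(0,1/2)$.
\begin{itemize}[label=--]
\item $K_n$ is $(\xi,s)$-lower-uniform with $\xi=1$ for every $s$, since $e_{K_n}(A,B)=|A||B|$ for disjoint $A,B$.
\item So we may take $\xi=1$ and, say, $C_0=1$, and invoke Theorem~\ref{thm:packing} with these $\eps,\xi,C_0,\zeta$.
\item This gives constants $C,R,c,n_0$ and, for each $n\ge n_0$, edge-disjoint spanning triangle-free graphs $G_0,\ldots,G_{r-1}$ with $r=\lceil R\sqrt{n/\log n}\rceil$.
\item These satisfy $\sum_i e(G_i)\ge(1-\eps)\binom n2$, so at most $\eps\binom n2$ edges are uncovered.
\item Independent uniform orientations satisfy the feedback-arc statement and the hereditary dichromatic bound $\dich(D_i[W])\ge |W|/(k_n-1)$ outside an event of probability at most $\exp[-c\sqrt n(\log n)^{3/2}]$.
\end{itemize}
The upper bound $r=O_{\eps,\zeta}(\sqrt{n/\log n})$ is then immediate.

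\textbf{Lower bound on the number of pieces.} We also need $r=\Omega(\sqrt{n/\log n})$. Since $r$ is given explicitly as $\lceil R\sqrt{n/\log n}\rceil$, this holds as stated.

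One might want the more intrinsic statement that \emph{any} such family must have this many members. This follows from a short counting argument:
\begin{itemize}[label=--]
\item By the Ajtai--Koml\'os--Szemer\'edi/Shearer bound, an $n$-vertex triangle-free graph has an independent set of size $\Omega(\sqrt{n\log n})$.
\item Hence its maximum degree is $O(\sqrt{n\log n})$, because the neighbourhood of any vertex is an independent set.
\item So it has $O(n^{3/2}\sqrt{\log n})$ edges.
\item Covering $(1-\eps)\binom n2$ edges therefore needs $\Omega(\sqrt{n/\log n})$ graphs.
\end{itemize}

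\textbf{Main obstacle.} There is essentially none. The only point needing care is the bookkeeping of which constants depend on $(\eps,\zeta)$. This is what justifies the $\Theta_{\eps,\zeta}$ notation, and it holds because $\xi=1$ and $C_0=1$ are absolute.
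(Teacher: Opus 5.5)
Your main argument is the paper's proof. You take $H=K_n$, observe that $e_{K_n}(A,B)=|A||B|$ makes $K_n$ lower-uniform with $\xi=1$ at every scale, apply Theorem~\ref{thm:packing} with $\xi=1$ and a fixed $C_0$, and read off $r=\lceil R\sqrt{n/\log n}\rceil$ with $R>0$ depending only on $\eps,\zeta$. That part is complete and needs nothing more.

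\textbf{Error in the optional ``intrinsic'' lower bound.} This aside is not needed for the corollary, but it is wrong as written. The Ajtai--Koml\'os--Szemer\'edi/Shearer theorem gives a \emph{lower} bound $\alpha(G)\ge c_0\sqrt{n\log n}$ on the independence number. Bounding $\Delta(G)$ via independent neighbourhoods needs an \emph{upper} bound on $\alpha(G)$. In fact triangle-free graphs can have $\Delta(G)=n-1$ and $\lfloor n^2/4\rfloor$ edges, for example $K_{\lfloor n/2\rfloor,\lceil n/2\rceil}$. Moreover, $K_n$ is a union of $\lceil\log_2 n\rceil$ bipartite graphs. So, absent further constraints, far fewer than $\sqrt{n/\log n}$ triangle-free graphs suffice.

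\textbf{Correct source of the upper bound.} The bound on $\alpha(G_i)$ comes from the feedback-arc property itself. Suppose some $G_i$ had an independent set $U$ with $|U|=k_n$. Then for every orientation $\fas(D_i[U])=0=(\tfrac12-\zeta)e(G_i[U])$, so the exceptional event would have probability $1$, contradicting the bound $\exp[-c\sqrt n(\log n)^{3/2}]<1$. Hence $\alpha(G_i)<k_n$, so $\Delta(G_i)\le k_n-1$ and $e(G_i)\le n(k_n-1)/2$. Therefore any such family covering $(1-\eps)\binom n2$ edges (with $\eps<1$) has at least $(1-\eps)(n-1)/(k_n-1)=\Omega_{\eps,\zeta}(\sqrt{n/\log n})$ members.
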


The same conclusion holds in fixed-density binomial random graphs.

\begin{corollary}\label{cor:random-host}
Fix $p\in(0,1]$, $\eps>0$, and $\zeta\in(0,1/2)$. With probability $1-o(1)$, a graph $H\sim G(n,p)$ contains an edge-disjoint family of $\Theta_{p,\eps,\zeta}(\sqrt{n/\log n})$ spanning triangle-free graphs covering at least $(1-\eps)e(H)$ edges, for which independent uniform orientations satisfy the conclusion of Theorem~\ref{thm:packing} with conditional probability at least $1-\exp\!\left[-\Omega_{p,\eps,\zeta}\!\left(\sqrt n\,(\log n)^{3/2}\right)\right]$.
Moreover, $e(H)=(1+o(1))p\binom n2$ with probability $1-o(1)$.
\end{corollary}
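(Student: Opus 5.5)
The plan is to deduce Corollary~\ref{cor:random-host} from Theorem~\ref{thm:packing} by showing that $H\sim G(n,p)$ is, with probability $1-o(1)$, $(\xi,t)$-lower-uniform with $\xi=p/2$ and $t=\lceil C_0\sqrt{n\log n}\rceil$ for some fixed $C_0$ (say $C_0=1$). Conditioned on this event, which depends only on $H$, we apply Theorem~\ref{thm:packing} with parameters $\eps,\xi=p/2,C_0,\zeta$. This gives $r=\lceil R\sqrt{n/\log n}\rceil$ edge-disjoint spanning triangle-free graphs covering at least $(1-\eps)e(H)$ edges. The orientation failure probability is at most $\exp[-c\sqrt n(\log n)^{3/2}]$. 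Since the orientations are chosen independently of $H$, this bound is exactly a conditional probability given $H$, and $c$ depends only on $p,\eps,\zeta$ once $C_0$ is fixed.

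Two points about the number of graphs. First, $r=\Theta(\sqrt{n/\log n})$ holds as an upper bound by construction. For the matching lower bound on the count, I would argue as follows. Each $G_i$ has independence number less than $k_n\le C\sqrt{n\log n}$ on the good event, because independent sets are acyclic. Hence each $G_i$ has $O(n^{3/2}\sqrt{\log n})$ edges: a triangle-free graph has independent neighbourhoods, so its maximum degree is below $k_n$. Covering $(1-\eps)p\binom n2$ edges therefore needs $\Omega(\sqrt{n/\log n})$ nonempty graphs. Empty graphs among the $G_i$ are harmless, but the argument shows the count is of the right order in any case.

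Second, the edge count $e(H)=(1+o(1))p\binom n2$ follows from Chebyshev's inequality, or Chernoff, applied to $\operatorname{Bin}(\binom n2,p)$.

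The main step is lower-uniformity. Fix disjoint $A,B$ with $|A|=|B|=t$. Then $e_H(A,B)\sim\operatorname{Bin}(t^2,p)$, and Chernoff gives
\[
\Pp\bigl(e_H(A,B)<\tfrac p2 t^2\bigr)\le \exp(-pt^2/8).
\]
The number of pairs $(A,B)$ is at most $n^{2t}=\exp(2t\log n)$. Since $t^2\ge C_0^2 n\log n$ is much larger than $t\log n=O(\sqrt n(\log n)^{3/2})$, the union bound gives failure probability $\exp(-\Omega(n\log n))=o(1)$. The only care needed is the order of quantifiers. One fixes $C_0$ and $\xi$ depending on $p$ first, then takes the constants $C,R,c,n_0$ from Theorem~\ref{thm:packing}, and all dependence lands in $\Omega_{p,\eps,\zeta}$. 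I do not expect a genuine obstacle.
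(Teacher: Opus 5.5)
Your proposal is correct and follows the paper's proof essentially verbatim. You use Chernoff plus a union bound to get $(p/2,t)$-lower-uniformity with $C_0=1$, then apply Theorem~\ref{thm:packing} conditionally on $H$, and use a Chernoff-type bound for $e(H)$. The only differences are cosmetic: you count pairs by $n^{2t}$ where the paper uses $3^n$, and your degree-based argument for the lower bound on the number of graphs is harmless but unnecessary, since $r=\lceil R\sqrt{n/\log n}\rceil$ with fixed $R>0$ is already $\Theta(\sqrt{n/\log n})$.
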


Apart from the published Guo--Warnke pseudorandom-subgraph theorem and the standard inverse Ramsey estimate quoted below, all arguments needed for our results are included in full. All logarithms are natural unless a base is displayed. We make no attempt to optimise constants.

\section{Feedback arc sets and an orientation transfer theorem}

\subsection{Orders, entropy, and feedback arc sets}

Let $D$ be an orientation of a graph $G$. An \emph{ordering} $\sigma$ of $D$ is an enumeration $v_1,v_2,\dots$ of $V(G)$. An arc $v_iv_j$ is \emph{backward} under $\sigma$ if $i>j$. Let $b_\sigma(D)$ be the number of backward arcs.
We first recall the following elementary fact.

\begin{proposition}\label{prop:fas-orders}
For every oriented graph $D$, $\fas(D)=\min_{\sigma} b_\sigma(D)$, where the minimum is over all orderings of $V(D)$. Consequently, $\fas(D)\le e(U(D))/2$. The same minimum is the least number of arcs that must be reversed to make $D$ acyclic.
\end{proposition}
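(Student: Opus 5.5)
We prove the three assertions of Proposition~\ref{prop:fas-orders} in turn, using two classical facts: a finite digraph is acyclic if and only if it has a topological ordering, and reversing all backward arcs of an ordering produces an acyclic digraph.

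\emph{The inequality $\fas(D)\le\min_\sigma b_\sigma(D)$.} Fix an ordering $\sigma$ and delete its $b_\sigma(D)$ backward arcs. Every remaining arc goes forward under $\sigma$, so any directed cycle would have to return to a smaller index, which is impossible. Hence the result is acyclic, and $\fas(D)\le b_\sigma(D)$ for every $\sigma$.

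\emph{The inequality $\min_\sigma b_\sigma(D)\le\fas(D)$.} Let $F$ be a minimum feedback arc set. The digraph $D-F$ is acyclic, so it has a topological ordering $\sigma$, in which every arc of $D-F$ is forward. Therefore every backward arc of $D$ under $\sigma$ lies in $F$, and $b_\sigma(D)\le|F|=\fas(D)$. This proves $\fas(D)=\min_\sigma b_\sigma(D)$.

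\emph{The bound $\fas(D)\le e(U(D))/2$.} Take any ordering $\sigma$ and its reverse $\sigma^{\mathrm{rev}}$. Each arc is backward in exactly one of the two, so $b_\sigma(D)+b_{\sigma^{\mathrm{rev}}}(D)=e(U(D))$. Hence the smaller of the two is at most $e(U(D))/2$, and so is $\fas(D)$.

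\emph{Reversal instead of deletion.} Let $\rho(D)$ be the least number of arcs whose reversal makes $D$ acyclic.

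To show $\rho(D)\le\min_\sigma b_\sigma(D)$, fix $\sigma$ and reverse all of its backward arcs. Every arc then goes forward under $\sigma$, so the resulting digraph is acyclic.

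To show $\rho(D)\ge\fas(D)$, suppose reversing a set $R$ of arcs gives an acyclic digraph $D'$. Deleting the reversed copies of $R$ from $D'$ leaves $D-R$. A subdigraph of an acyclic digraph is acyclic, so $R$ is a feedback arc set of $D$ and $|R|\ge\fas(D)$.

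Combining the two inequalities gives $\rho(D)=\fas(D)$.

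The only point needing care is the second inequality for $\fas$, where the topological ordering of $D-F$ must be taken on all of $V(D)$. This holds because $D-F$ has the same vertex set as $D$. Everything else is routine.
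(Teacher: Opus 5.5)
Your proposal is correct and follows the paper's proof essentially step for step: delete the backward arcs of an ordering for one inequality, and take a topological ordering of $D-F$ for the other. The only differences are minor. For the $e(U(D))/2$ bound you pair $\sigma$ with its reverse instead of averaging over a uniformly random ordering. For the reversal lower bound you note that $D-R$ is a subdigraph of the acyclic $D'$, rather than taking a topological ordering of $D'$. Both variants are valid.
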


\begin{proof}
For any ordering $\sigma$, deleting all backward arcs leaves an orientation in which every remaining arc is forward. Such an orientation is acyclic, so $\fas(D)\le b_\sigma(D)$.
Taking the minimum over $\sigma$ gives $\fas(D)\le\min_\sigma b_\sigma(D)$.

Conversely, let $E_0$ be a minimum feedback arc set, and take a topological ordering $\sigma$ of the acyclic digraph $D-E_0$, so that all its arcs are forward. Every backward arc of $D$ under $\sigma$ then belongs to $E_0$. Hence
\[
    \min_\sigma b_\sigma(D)\le b_\sigma(D)\le |E_0|=\fas(D),
\]
which is the reverse inequality, proving the identity.

If $\sigma$ is a uniformly random ordering, each arc is backward with probability $1/2$. Therefore $\Ee b_\sigma(D)=e(U(D))/2$, and some ordering has at most this many backward arcs, proving $\fas(D)\le e(U(D))/2$.

Finally, reversing the backward arcs of an ordering makes every arc forward and therefore gives an acyclic orientation. Conversely, if reversing a set $R$ of arcs produces an acyclic orientation, then in a topological ordering of the resulting orientation every original backward arc lies in $R$. Thus the minimum reversal number is also $\min_\sigma b_\sigma(D)$.
\end{proof}

For $x\in[0,1]$, let
\[
    \hbin(x):=-x\log_2x-(1-x)\log_2(1-x),
\]
with the usual convention $0\log_2 0=0$.

\begin{lemma}\label{lem:entropy}
For every integer $m\ge1$ and every $\theta\in[0,1/2]$,
\[
    \sum_{j=0}^{\lfloor\theta m\rfloor}\binom mj
    \le 2^{\hbin(\theta)m}.
\]
Consequently, if $X\sim\operatorname{Bin}(m,1/2)$, then $\Pp(X\le\theta m)
    \le 2^{-[1-\hbin(\theta)]m}$.
\end{lemma}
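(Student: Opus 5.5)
The plan is the standard exponential-moment (Chernoff) argument with the weight $\theta/(1-\theta)$. I treat the case $\theta=0$ separately: there the sum is $\binom m0=1=2^{0}$ and $\hbin(0)=0$, so both claims are immediate.

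For $\theta\in(0,1/2]$, I will use the binomial theorem as the identity
\[
1=\bigl(\theta+(1-\theta)\bigr)^m=\sum_{j=0}^m\binom mj\theta^j(1-\theta)^{m-j}.
\]
I then drop the terms with $j>\lfloor\theta m\rfloor$; all terms are nonnegative, so this only decreases the right-hand side. Since $\theta\le 1/2$, the ratio $\theta/(1-\theta)$ is at most $1$. Hence for every $j\le\theta m$,
\[
\theta^j(1-\theta)^{m-j}=(1-\theta)^m\Bigl(\tfrac{\theta}{1-\theta}\Bigr)^j\ge(1-\theta)^m\Bigl(\tfrac{\theta}{1-\theta}\Bigr)^{\theta m}=\theta^{\theta m}(1-\theta)^{(1-\theta)m}.
\]
The right-hand side equals $2^{-\hbin(\theta)m}$, directly from the definition of $\hbin$. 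Combining these gives
\[
1\ge 2^{-\hbin(\theta)m}\sum_{j\le\theta m}\binom mj,
\]
which is the first claim. The only point needing care is the direction of the monotonicity in $j$, which is exactly where $\theta\le1/2$ is used.

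For the consequence, I take $X\sim\operatorname{Bin}(m,1/2)$. Then
\[
\Pp(X\le\theta m)=2^{-m}\sum_{j=0}^{\lfloor\theta m\rfloor}\binom mj\le 2^{-m}2^{\hbin(\theta)m}=2^{-[1-\hbin(\theta)]m}.
\]

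I expect no real obstacle here. The only things to check are the degenerate endpoint $\theta=0$ and the convention $0\log_20=0$, both handled above, and the fact that the floor in the summation range is harmless because the bound holds termwise for all $j\le\theta m$.
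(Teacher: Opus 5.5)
Your proposal is correct and is essentially the paper's argument: the paper bounds $(1+x)^m \ge x^{\theta m}\sum_{j\le\theta m}\binom mj$ with $x=\theta/(1-\theta)$, which is your identity $1=(\theta+(1-\theta))^m$ multiplied through by $(1-\theta)^{-m}$. Both use the same monotonicity in $j$ (where $\theta\le 1/2$ enters) and the same identification of the resulting factor with $2^{\hbin(\theta)m}$.
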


\begin{proof}
The assertion is immediate for $\theta=0$. Suppose $0<\theta\le1/2$ and let $x=\theta/(1-\theta)\le1$. For every $j\le\theta m$ we have $x^j\ge x^{\theta m}$. Hence
\[
 (1+x)^m
 =\sum_{j=0}^m\binom mjx^j
 \ge x^{\theta m}\sum_{j=0}^{\lfloor\theta m\rfloor}\binom mj.
\]
Therefore
\[
 \sum_{j=0}^{\lfloor\theta m\rfloor}\binom mj
 \le\left(\frac{1+x}{x^\theta}\right)^m
 =2^{\hbin(\theta)m},
\]
which is the first bound. Dividing by $2^m$ gives the second.
\end{proof}

\begin{lemma}\label{lem:fixed-orientation}
Let $G$ be a graph on $n$ vertices with $m$ edges, and let $D$ be a uniformly random orientation of $G$. For every $\theta\in[0,1/2]$,
\[
  \Pp\bigl(\fas(D
)\le\theta m\bigr)
  \le n!\,2^{-[1-\hbin(\theta)]m}.
\]
In particular, for $\theta=0$,
\[
    \Pp(D\text{ is acyclic})\le n!\,2^{-m}.
\]
\end{lemma}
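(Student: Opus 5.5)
By Proposition~\ref{prop:fas-orders}, $\fas(D)\le\theta m$ holds exactly when some ordering $\sigma$ of $V(G)$ satisfies $b_\sigma(D)\le\theta m$. So the event in question is the union, over all $n!$ orderings $\sigma$, of the events $\{b_\sigma(D)\le\theta m\}$. The plan is to bound each of these events by Lemma~\ref{lem:entropy} and then take a union bound.

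First fix an ordering $\sigma$. Each edge of $G$ is oriented independently and uniformly, and exactly one of its two orientations is backward under $\sigma$. Hence the indicators that the $m$ edges are backward are independent Bernoulli$(1/2)$ variables, and $b_\sigma(D)\sim\operatorname{Bin}(m,1/2)$. Lemma~\ref{lem:entropy} then gives
\[
\Pp\bigl(b_\sigma(D)\le\theta m\bigr)\le 2^{-[1-\hbin(\theta)]m}.
\]
This needs $m\ge1$. If $m=0$ the claimed bound reads $n!\ge$ a probability, which holds trivially.

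Summing over the $n!$ orderings gives $\Pp(\fas(D)\le\theta m)\le n!\,2^{-[1-\hbin(\theta)]m}$. For $\theta=0$, Proposition~\ref{prop:fas-orders} says that $D$ is acyclic if and only if $\fas(D)=0$, and $\hbin(0)=0$, so the bound becomes $n!\,2^{-m}$. One can also see this directly: for each ordering there is exactly one orientation that is forward everywhere, and it has probability $2^{-m}$.

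None of these steps is a real obstacle. The only point needing care is the independence of the backward indicators, which is immediate because orientations of distinct edges are independent.
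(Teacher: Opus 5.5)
Your proof is correct and follows the paper's argument: you use Proposition~\ref{prop:fas-orders} to reduce to a union over the $n!$ orderings, note that $b_\sigma(D)\sim\operatorname{Bin}(m,1/2)$, and apply Lemma~\ref{lem:entropy}. Your separate treatment of the case $m=0$, which Lemma~\ref{lem:entropy} does not cover, is a small point of care that the paper leaves implicit.
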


\begin{proof}
Fix an ordering $\sigma$ of $V(G)$. Under a uniformly random orientation, the indicators that the edges are backward under $\sigma$ are independent Bernoulli random variables with parameter $1/2$. Thus $b_\sigma(D)\sim\operatorname{Bin}(m,1/2)$. By Proposition~\ref{prop:fas-orders}, the event $\fas(D)\le\theta m$ implies that some one of the $n!$ orderings has at most $\theta m$ backward arcs. Lemma~\ref{lem:entropy} and a union bound over the orderings give the stated bound.
\end{proof}

\subsection{From cross-density to uniform feedback-arc robustness}

The following averaging lemma upgrades lower bounds between equal-sized disjoint sets to induced-density bounds at every larger scale.

\begin{lemma}\label{lem:cross-to-induced}
Let $G$ be a graph. Suppose that for some integer $s\ge1$ and some $q>0$, $e_G(A,B)\ge qs^2$ for every pair of disjoint $s$-vertex sets $A,B\subseteq V(G)$. Then every set $U\subseteq V(G)$ of size $u\ge2s$ satisfies $e(G[U])\ge q\binom u2$.
\end{lemma}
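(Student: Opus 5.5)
We prove this by averaging over random pairs of disjoint $s$-subsets of $U$. Fix $U$ with $|U|=u\ge 2s$. Choose an ordered pair $(A,B)$ uniformly at random among all pairs of disjoint $s$-element subsets of $U$. Such pairs exist because $u\ge 2s$. By hypothesis, $e_G(A,B)\ge qs^2$ for every such pair, so
\[
 \Ee\, e_G(A,B)\ge qs^2 .
\]

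Next we compute the expectation exactly by linearity. For each edge $xy$ of $G[U]$, let $p$ be the probability that $xy$ crosses between $A$ and $B$, meaning that one endpoint lies in $A$ and the other in $B$. By symmetry $p$ does not depend on the edge, and only edges inside $U$ can cross. Hence $\Ee\, e_G(A,B)=p\,e(G[U])$.

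To find $p$, apply the same identity to the complete graph on $U$. Every pair $(A,B)$ has exactly $s^2$ crossing pairs of vertices, so $p\binom u2=s^2$, that is,
\[
 p=\frac{s^2}{\binom u2}.
\]
Alternatively, one can compute directly that $p=2\cdot\frac{s}{u}\cdot\frac{s}{u-1}$.

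Combining these gives
\[
 \frac{s^2}{\binom u2}\,e(G[U])\ge qs^2,
\]
and therefore $e(G[U])\ge q\binom u2$, as claimed.

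The only step needing care is the symmetry argument showing that the crossing probability $p$ is the same for every pair of vertices in $U$, together with its evaluation. The double-counting against the complete graph handles this cleanly, so no step is a real obstacle. The condition $u\ge 2s$ is used only to guarantee that the random pair $(A,B)$ exists.
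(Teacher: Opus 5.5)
Your proposal is correct and is essentially the paper's proof. Both average $e_G(A,B)$ over a uniformly random ordered pair of disjoint $s$-subsets of $U$, use that each edge of $G[U]$ crosses with probability $2s^2/(u(u-1))=s^2/\binom u2$, and compare with $\Ee\, e_G(A,B)\ge qs^2$. Your double count against the complete graph on $U$ is a clean justification of that crossing probability, which the paper simply states.
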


\begin{proof}
Choose an ordered pair $(A,B)$ uniformly from all pairs of disjoint $s$-vertex subsets of $U$. Each edge of $G[U]$ has probability
$\frac{2s^2}{u(u-1)}$ of having one endpoint in $A$ and the other in $B$. Therefore
\[
    \Ee e_G(A,B)
    =\frac{2s^2}{u(u-1)}e(G[U]).
\]
The hypothesis gives $\Ee e_G(A,B)\ge qs^2$. Cancelling $s^2$ and rearranging yields $e(G[U])\ge q\binom u2$.
\end{proof}

We isolate the probabilistic transfer statement used throughout.

\begin{theorem}\label{thm:transfer}
Let $G$ be an $n$-vertex graph. Suppose that for some integer $s\ge1$ and some $q>0$, $e_G(A,B)\ge qs^2$
for every pair of disjoint $s$-vertex sets $A,B$. Let $k=2s$, fix $\zeta\in(0,1/2)$, and define $\kappa_\zeta:=1-\hbin(1/2-\zeta)>0$. If $\kappa_\zeta q(k-1)\ge8\log_2 n$, then a uniformly random orientation $D$ of $G$ satisfies
\[
 \Pp\!\left(\exists U\subseteq V(G),\ |U|\ge k:\ \fas(D[U])\le\left(\tfrac12-\zeta\right)e(G[U])\right)\le 2n^{-3k}.
\]
Consequently, outside this exceptional event, every induced subdigraph on at least $k$ vertices has acyclic number less than $k$ and feedback arc set number greater than $(1/2-\zeta)$ times its number of arcs.
\end{theorem}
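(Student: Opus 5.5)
Combine Lemma~\ref{lem:cross-to-induced}, which gives density, with Lemma~\ref{lem:fixed-orientation}, which gives concentration for a fixed vertex set, and take a union bound over all vertex sets $U$ with $|U|\ge k$.

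\textbf{Density.} Fix $U$ with $|U|=u\ge k=2s$. Lemma~\ref{lem:cross-to-induced} gives $m_U:=e(G[U])\ge q\binom u2$.

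\textbf{Single set.} The orientation $D[U]$ is a uniformly random orientation of $G[U]$. Apply Lemma~\ref{lem:fixed-orientation} with $\theta=1/2-\zeta$. This gives
\[
\Pp\bigl(\fas(D[U])\le(\tfrac12-\zeta)m_U\bigr)\le u!\,2^{-\kappa_\zeta m_U}\le u^u\,2^{-\kappa_\zeta q u(u-1)/2}.
\]
Since $u-1\ge k-1$, the hypothesis $\kappa_\zeta q(k-1)\ge 8\log_2 n$ gives
\[
\kappa_\zeta q\,u(u-1)/2\ge 4u\log_2 n .
\]
So the bound is at most $u^u n^{-4u}\le n^{-3u}$, using $u\le n$.

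\textbf{Union bound.} There are at most $\binom nu\le n^u$ sets of size $u$. The total failure probability is therefore at most
\[
\sum_{u\ge k} n^u n^{-3u}=\sum_{u\ge k}n^{-2u}\le 2n^{-2k}.
\]
This falls short of the claimed $2n^{-3k}$. To recover the missing factor, I would bound $u!$ by $u^u$ and $\binom nu$ by $n^u/u!$, so that the two factorials cancel. Each fixed $u$ then contributes at most $n^u\cdot n^{-4u}=n^{-3u}$. Summing,
\[
\sum_{u\ge k}n^{-3u}\le n^{-3k}\cdot\frac{1}{1-n^{-3}}\le 2n^{-3k}.
\]
Tracking these constants exactly, including the use of $u(u-1)/2$ rather than $u^2/2$, is the only delicate point. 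Some care is also needed when $m_U$ is not an integer multiple, but Lemma~\ref{lem:entropy} handles floors.

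\textbf{Consequences.} Outside the exceptional event, every $U$ with $|U|\ge k$ has $m_U>0$, by the density step. It also has $\fas(D[U])>(1/2-\zeta)m_U>0$. Hence $D[U]$ contains a directed cycle, so no set of size at least $k$ is acyclic, and $\acyc$ of every induced subdigraph is less than $k$.
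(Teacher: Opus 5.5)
Your final argument is correct and is essentially the paper's proof: you get density from Lemma~\ref{lem:cross-to-induced} and the bound $u!\,2^{-\kappa_\zeta q\binom u2}$ from Lemma~\ref{lem:fixed-orientation}, and then take a union bound in which $\binom nu u!=(n)_u\le n^u$ absorbs the orderings, giving $n^{-3u}$ per size $u$ and $2n^{-3k}$ in total. One wording fix: in that step you must keep $u!$ rather than first replacing it by $u^u$, since that would leave $n^u u^u/u!\le(\mathrm{e}n)^u$ and lose a factor $\mathrm{e}^u$; the lossy $2n^{-2k}$ detour can simply be dropped.
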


\begin{proof}
Write $\theta=1/2-\zeta$ and $\kappa=1-\hbin(\theta)$. Fix $u\ge k$ and a set $U$ of size $u$. By Lemma~\ref{lem:cross-to-induced}, $e(G[U])\ge q\binom u2$.
Applying Lemma~\ref{lem:fixed-orientation} to $D[U]$ gives
\[
 \Pp\bigl(\fas(D[U])\le\theta e(G[U])\bigr)
 \le u!\,2^{-\kappa q\binom u2}.
\]
Taking a union bound over the $\binom nu$ choices of $U$ and using $\binom nu u!=(n)_u\le n^u$, we obtain
\[
 \Pp\!\left(\exists U,\ |U|=u:\ \fas(D[U])\le\theta e(G[U])\right)\le n^u2^{-\kappa q u(u-1)/2}.
\]
Since $u\ge k$, the hypothesis $\kappa q(k-1)\ge8\log_2 n$ implies
$\frac{\kappa q(u-1)}2
    \ge4\log_2 n$.
Thus $n^u2^{-\kappa q u(u-1)/2}\le n^{-3u}$. Summing over $u=k,\ldots,n$ gives
\[
    \sum_{u=k}^n n^{-3u}
    \le\frac{n^{-3k}}{1-n^{-3}}
    \le2n^{-3k}
\]
for $n\ge2$, proving the stated bound.

If an induced subdigraph on at least $k$ vertices contained an acyclic set of order $k$, the induced orientation on that $k$-set would have feedback arc set number zero, contradicting the good event. This proves the final assertion.
\end{proof}

\section{Pseudorandom triangle-free subgraphs}

\subsection{The Guo--Warnke input}

We use the following theorem as a black box. It is Theorem~4 of Guo and Warnke~\cite{GuoWarnke}.

\begin{theorem}[\cite{GuoWarnke}]\label{thm:GW}
There are absolute constants $\beta_0,R_0>0$ such that, for all $\gamma,\delta\in(0,1]$, $\beta\in(0,\beta_0)$, and
$L\ge \frac{R_0}{\delta^2\sqrt{\beta\gamma}}$, the following holds for all sufficiently large $n$ with
$\rho=\sqrt{\frac{\beta\log n}{n}}$. For any $n$-vertex graph $H$, there exists a triangle-free spanning subgraph $G$  such that
\[
    e_G(A,B)=(1\pm\delta)\rho e_H(A,B)
\]
for all disjoint $A,B\subseteq V(H)$ satisfying
$|A|=|B|=s$ with $s=\left\lceil L\sqrt{n\log n}\right\rceil$
 and $e_H(A,B)\ge\gamma|A||B|$.
Here $x=(1\pm\delta)y$ means $(1-\delta)y\le x\le(1+\delta)y$.
\end{theorem}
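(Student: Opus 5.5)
The plan is to prove the statement by a semi-random (``nibble'') construction of $G$ inside $H$. The condition on $L$ should come out of the final union bound over pairs $(A,B)$. We build $G$ in $I$ rounds, with $I$ a large constant depending on $\beta$. In round $i$ every still-\emph{open} edge of $H$ is sampled independently with a small probability $p_i$. An edge is open if it is not yet in $G$ and does not close a triangle with two edges already in $G$. We then remove a sampled edge whenever it forms a triangle with other sampled edges or with current edges of $G$; this deletion keeps $G$ triangle-free at every stage. The total sampling density is tuned so that $\sum_i p_i(\text{fraction still open})\approx\rho$. Because $\beta<\beta_0$ is small, the expected number of triangles created per round is a small fraction of the sampled edges. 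This gives two properties: the open fraction stays bounded below by a constant, and the deleted edges form only a $\delta/10$ fraction of every large cut.

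The key intermediate statement is a set of trajectory estimates, proved by induction on rounds, for the following quantities. First, vertex degrees in $G$, which should be $\approx\rho\deg_H$. Second, codegrees, of order $\rho^2 n\approx\beta\log n$. Third, the open-edge density between any two sets $A,B$ of size $s$ with $e_H(A,B)\ge\gamma s^2$. The fraction of $H$-edges across $(A,B)$ that remain open should follow a deterministic trajectory $q_i=(1\pm\delta/4)$ times a function of $i$. The local quantities concentrate by Chernoff/Freedman inequalities with a union bound over the $n^{O(1)}$ vertices and pairs. The number of open edges $uv$ closed because $u$ and $v$ have a common $G$-neighbour is governed by codegrees, so it is controlled locally.

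Finally we verify the cut estimate. Fix $(A,B)$. Conditioned on the history, the number of edges added to $e_G(A,B)$ in round $i$ is a sum of independent indicators with mean about $p_iq_ie_H(A,B)$. Deletions across $(A,B)$ are bounded by a Janson-type or bounded-differences argument, using the codegree bounds from the previous step. Over all rounds, $e_G(A,B)$ deviates from $\rho e_H(A,B)$ by more than $\delta\rho e_H(A,B)$ with probability at most $\exp(-c\delta^2\rho\gamma s^2)$. Here $\rho\gamma s^2\approx\gamma\sqrt\beta L^2\sqrt n(\log n)^{3/2}$, whereas there are at most $\binom ns^2\le\exp(2L\sqrt n(\log n)^{3/2})$ pairs $(A,B)$. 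The union bound therefore succeeds exactly when $c\delta^2\gamma\sqrt\beta L^2\gtrsim L$. This is implied by the stated condition $L\ge R_0/(\delta^2\sqrt{\beta\gamma})$ (together with $\gamma\le1$) once $R_0$ is large.

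The main obstacle is the concentration of the deletions and open-edge losses across a fixed pair $(A,B)$. These are not sums of independent terms: one vertex of high $G$-degree into $A$ or $B$ can close many open edges at once. So the tail bound must be taken conditional on the good event of the previous rounds, where all degrees and codegrees are typical. The first tool to try is Freedman's martingale inequality, exposing edges one at a time and using the codegree bound $O(\log n)$ as the Lipschitz constant. This costs only a $\log n$ factor in the exponent. That loss is absorbed because the exponent carries a spare factor $L$: it is of order $L^2\sqrt n(\log n)^{3/2}$, while the union bound only requires beating $L\sqrt n(\log n)^{3/2}$.
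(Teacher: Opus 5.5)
The paper gives no proof of this statement. It imports it as a black box (Theorem~4 of Guo--Warnke), whose proof is a semi-random nibble variant of the triangle-free process. Your strategy is therefore of the right type, but as written it has concrete gaps.

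First, a constant number $I$ of rounds cannot work. Some round must sample open edges with density at least $\rho/I$. A sampled edge then lies in about $n(\rho/I)^2=\beta\log n/I^2\to\infty$ triangles formed with other edges of the same round. Once $G$ has density $\Theta(\rho)$, it also lies in about $\beta\log n/I$ triangles using one edge of $G$. So almost all of that round's sample is deleted. Small $\beta$ does not rescue this, because the relevant quantity is $\beta\log n$, not $\beta$. You need per-round densities $p$ with $pn\rho$ small, hence a number of rounds growing at least like $\beta\log n$, with errors accumulating over all of them. For the same reason the open fraction is \emph{not} bounded below by a constant. A typical pair ends with about $\rho^2 n=\beta\log n$ expected common neighbours, and it stays open only if it has none, so the open density decays to $n^{-\Theta(\beta)}$. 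Your trajectory estimates must therefore be relative errors on a polynomially decaying scale; small $\beta$ only makes this decay mild.

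Second, and decisively, the union bound leaves only a constant-factor margin. Even a Chernoff-strength bound for $e_G(A,B)$ has exponent $\Theta(\delta^2\rho e_H(A,B))$. This is at least of order $\delta^2\gamma\sqrt\beta\,L^2\sqrt n(\log n)^{3/2}$, while there are $\exp(\Theta(L\sqrt n(\log n)^{3/2}))$ pairs $(A,B)$. The ratio is of order $\delta^2\gamma\sqrt\beta L$, a constant. Losing a factor $\log n$ in the exponent, as your Freedman step with Lipschitz constant $O(\log n)$ does, is therefore fatal. It leaves $L^2\sqrt n(\log n)^{1/2}$ against $L\sqrt n(\log n)^{3/2}$, and no fixed $L$ absorbs that.

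Moreover, the quantity controlling open-pair losses across $(A,B)$ is not a codegree. One new edge $aw$ closes the open pairs $ab$ with $b\in N_G(w)\cap B$. Since $\rho n<s$, every $w$ has some $s$-set $B\supseteq N_G(w)$, so no useful worst-case Lipschitz bound holds uniformly over all pairs. What is missing is a concentration argument for the deletion and open-pair-loss counts that uses typical rather than worst-case bounded differences and still beats $s\log n$ by a large constant factor. This is where the real work lies, and your proposal leaves it open.

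Finally, even granting Chernoff-strength bounds, your own arithmetic requires $\delta^2\gamma\sqrt\beta L$ to exceed a constant. Substituting $L\ge R_0/(\delta^2\sqrt{\beta\gamma})$ gives only $\delta^2\gamma\sqrt\beta L\ge R_0\sqrt\gamma$. With an absolute $R_0$ this is not bounded below as $\gamma\to0$, so the factor $\gamma\le1$ works against you, not for you.
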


Crucially, $L$ may be chosen arbitrarily large after the other parameters are fixed, which lets the entropy exponent in Theorem~\ref{thm:transfer} dominate the number of vertex sets and orderings.

\subsection{Two double-counting lemmas}

\begin{lemma}\label{lem:scale-lift}
Let $1\le t\le s$ and let $H$ be $(\xi,t)$-lower-uniform. Then $H$ is $(\xi,s)$-lower-uniform.
\end{lemma}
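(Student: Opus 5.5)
Fix disjoint sets $A,B\subseteq V(H)$ with $|A|=|B|=s$. The plan is to average the hypothesis over random $t$-subsets. Choose $A'\subseteq A$ and $B'\subseteq B$ independently and uniformly among all $t$-element subsets. Since $A'\cap B'=\emptyset$ and $|A'|=|B'|=t$, the $(\xi,t)$-lower-uniformity of $H$ gives $e_H(A',B')\ge \xi t^2$ for every such choice. Hence
\[
 \Ee\, e_H(A',B')\ge \xi t^2 .
\]

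Next compute this expectation exactly. An edge $ab$ with $a\in A$ and $b\in B$ is counted in $e_H(A',B')$ exactly when $a\in A'$ and $b\in B'$. These two events are independent, each of probability $t/s$. So each of the $e_H(A,B)$ cross edges contributes $t^2/s^2$, and by linearity of expectation
\[
 \Ee\, e_H(A',B')=\frac{t^2}{s^2}\,e_H(A,B).
\]

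Comparing the two displays and cancelling $t^2>0$ gives $e_H(A,B)\ge \xi s^2=\xi|A||B|$. Since $A,B$ were arbitrary, $H$ is $(\xi,s)$-lower-uniform.

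There is no real obstacle in this argument. The only points to check are that $t\le s$, so $t$-subsets exist, and that disjointness of $A$ and $B$ passes to $A'$ and $B'$. If one prefers to avoid probabilistic language, the same argument can be written as a double count: sum $e_H(A',B')$ over all $\binom st^2$ pairs $(A',B')$, and note that each cross edge is counted $\binom{s-1}{t-1}^2$ times.
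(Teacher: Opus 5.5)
Your proposal is correct and is essentially the paper's proof: the paper sums $e_H(A',B')$ over all $\binom st^2$ pairs of $t$-subsets and uses that each cross edge is counted $\binom{s-1}{t-1}^2$ times together with $t\binom st=s\binom{s-1}{t-1}$. Your expectation over independent uniform $t$-subsets is the normalized form of that count, and you note the double-count version yourself.
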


\begin{proof}
Fix disjoint $s$-vertex sets $A,B$. Sum $e_H(A',B')$ over all pairs $A'\in\binom At$ and $B'\in\binom Bt$. By lower-uniformity, this sum is at least $\binom st^2\xi t^2$.
On the other hand, every edge of $H[A,B]$ occurs in exactly $\binom{s-1}{t-1}^2$ summands. Therefore
\[
 e_H(A,B)\binom{s-1}{t-1}^2
 \ge \binom st^2\xi t^2.
\]
Since $t\binom st=s\binom{s-1}{t-1}$, division gives $e_H(A,B)\ge\xi s^2$.
\end{proof}

\begin{lemma}\label{lem:edge-double-count}
Let $F$ be an
$n$-vertex spanning subgraph of  $H$, and suppose $2s\le n$. If $e_F(A,B)\ge(1-\eps)e_H(A,B)$ for every ordered pair $(A,B)$ of disjoint $s$-vertex sets, then
$e(F)\ge(1-\eps)e(H)$.
\end{lemma}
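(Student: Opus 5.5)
The plan is to double count, in the same way as Lemma~\ref{lem:cross-to-induced} and Lemma~\ref{lem:scale-lift}, by averaging over a uniformly random ordered pair of disjoint $s$-vertex sets. Choose an ordered pair $(A,B)$ uniformly at random from all ordered pairs of disjoint $s$-subsets of $V(H)$; such pairs exist because $2s\le n$. For a fixed edge $xy$ of $H$, the probability that it has one endpoint in $A$ and the other in $B$ depends only on $n$ and $s$. I will compute it as
\[
 p_{n,s}=\frac{2s^2}{n(n-1)}>0 ,
\]
by the same symmetry computation used in the proof of Lemma~\ref{lem:cross-to-induced}, there with $u$ in place of $n$. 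The exact value does not matter; it only has to be positive and the same for every edge.

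Next, by linearity of expectation, $\Ee e_H(A,B)=p_{n,s}\,e(H)$ and $\Ee e_F(A,B)=p_{n,s}\,e(F)$. The second identity uses that $F$ is a spanning subgraph of $H$ on the same vertex set, so every edge of $F$ is an edge of the same vertex pairs and the same pair distribution applies.

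Then I take expectations in the pointwise hypothesis $e_F(A,B)\ge(1-\eps)e_H(A,B)$, which holds for every ordered pair in the sample space. This gives $p_{n,s}e(F)\ge(1-\eps)p_{n,s}e(H)$, and dividing by $p_{n,s}>0$ yields $e(F)\ge(1-\eps)e(H)$.

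There is no real obstacle. The only point to check is that every edge of $H$ is crossed with the same positive probability, and that follows from the symmetry of the uniform distribution under vertex permutations.
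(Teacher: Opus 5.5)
Your proposal is correct and is essentially the paper's argument: the paper sums the hypothesis over all ordered pairs of disjoint $s$-sets and notes that each edge is counted $2\binom{n-2}{s-1}\binom{n-s-1}{s-1}$ times. Your averaging over a uniform random pair is the normalized form of that count, and your value $p_{n,s}=2s^2/(n(n-1))$ is exactly this count divided by $\binom ns\binom{n-s}s$.
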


\begin{proof}
Sum the assumed inequality over all ordered pairs $(A,B)$ of disjoint $s$-vertex sets. Every edge $xy$ of either graph is counted the same number
$2\binom{n-2}{s-1}\binom{n-s-1}{s-1}$ of times: choose which endpoint lies in $A$, choose the other $s-1$ vertices of $A$, and then choose the other $s-1$ vertices of $B$. Cancelling this positive common factor proves the assertion.
\end{proof}

\section{One robustly oriented triangle-free graph}

\begin{proof}[Proof of Theorem~\ref{thm:single}]
Fix $\zeta\in(0,1/2)$ and let
$\kappa=1-\hbin\!\left(\frac12-\zeta\right)>0$.
Let $\beta_0,R_0$ be the constants from Theorem~\ref{thm:GW}. Fix
$\delta=\frac14$ and $\beta=\frac{\beta_0}{2}$.
Choose a constant $L$ satisfying
$L\ge \min\left\{\frac{R_0}{\delta^2\sqrt\beta},  \frac{8}{\kappa(1-\delta)\sqrt\beta\,\log2}\right\}$.
For sufficiently large $n$, let
$\rho=\sqrt{\frac{\beta\log n}{n}}$, $s=\left\lceil L\sqrt{n\log n}\right\rceil$, and $k=2s$.
By increasing the lower bound on $n$ if necessary, assume $k\le n$.

Apply Theorem~\ref{thm:GW} to $H=K_n$ with $\gamma=1$. We obtain a spanning triangle-free graph $G_n$ such that
$e_{G_n}(A,B)\ge(1-\delta)\rho s^2$
for every pair of disjoint $s$-vertex sets $A,B$. Let $q=(1-\delta)\rho$.
Since $k-1=2s-1\ge s$, the second condition on $L$ gives
\begin{align*}
    \kappa q(k-1)
    &\ge \kappa(1-\delta)\rho s\\
    &\ge \kappa(1-\delta)L\sqrt\beta\,\log n\\
    &\ge8\log_2 n.
\end{align*}
Theorem~\ref{thm:transfer} therefore yields
\[
 \Pp\!\left(\exists U,\ |U|\ge k:\ \fas(D_n[U])\le\left(\tfrac12-\zeta\right)e(G_n[U])\right)\le2n^{-3k}.
\]
For large $n$,
$k=2\left\lceil L\sqrt{n\log n}\right\rceil
    \le3L\sqrt{n\log n}$.
Moreover, $2n^{-3k}
    \le\exp\!\left[-c_1\sqrt n\,(\log n)^{3/2}\right]$
for some $c_1=c_1(\zeta)>0$. Taking $k_n=k$ and $C=3L$ proves the probability bound of Theorem~\ref{thm:single}.

On the good event, there is no acyclic set of order $k$, because such a set would induce feedback arc set number zero. Thus $\acyc(D_n)<k$. More generally, if $W\subseteq V(G_n)$ has $|W|\ge k$, then every acyclic set in $D_n[W]$ has order at most $k-1$. Any dicolouring of $D_n[W]$ therefore uses at least $|W|/(k-1)$ colours, which proves~(ii).

It remains to prove the global arc-edit assertion. By Lemma~\ref{lem:cross-to-induced}, every $k$-vertex set $U$ satisfies
\[
    e(G_n[U])\ge q\binom k2\ge q s^2.
\]
Since $s\ge L\sqrt{n\log n}$,
\begin{align*}
    q s^2
    &=(1-\delta)\sqrt{\frac{\beta\log n}{n}}\,s^2\\
    &\ge(1-\delta)L^2\sqrt\beta\,\sqrt n\,(\log n)^{3/2}.
\end{align*}
Let $\theta=1/2-\zeta$. On the good event, making $D_n[U]$ acyclic by deleting arcs requires more than $\theta e(G_n[U])$ deletions for every $k$-set $U$; by Proposition~\ref{prop:fas-orders}, the same lower bound holds for reversals. Hence, after decreasing the constant if necessary, fewer than
$c\sqrt n\,(\log n)^{3/2}$
arc edits, each of which deletes or reverses one arc, cannot create an acyclic $k$-set. Indeed, suppose that the edited digraph contained an acyclic $k$-set $U$. Delete from the original $D_n[U]$ every arc that was edited inside $U$. The remaining digraph is a subdigraph of the edited acyclic digraph on $U$, and is therefore acyclic. Thus the number of edits inside $U$ is at least $\fas(D_n[U])>\theta e(G_n[U])$, contradicting the assumed global edit budget. Consequently the edited digraph has acyclic number less than $k$, and hence dichromatic number at least $n/(k-1)$. This proves~(iii).
\end{proof}

For completeness, we record how the standard inverse Ramsey bound gives the other directions in Theorem~\ref{thm:extremal}.
We first recall Shearer's bound on independent sets in triangle-free graphs.

\begin{theorem}[\cite{Shearer}]\label{thm:inverse-ramsey}
There are constants $c_0>0$ and $n_0$ such that every triangle-free graph on $n\ge n_0$ vertices has an independent set of size at least $c_0\sqrt{n\log n}$.
\end{theorem}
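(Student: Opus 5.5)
The statement is Shearer's bound, which the paper quotes as an external input. I would recover it from the sharper average-degree form of Shearer's theorem plus a case split on degrees. Throughout, let $G$ be triangle-free on $n$ vertices, let $\Delta$ be its maximum degree and $d$ its average degree.

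\textbf{Step 1: large maximum degree.} In a triangle-free graph every neighbourhood $N(v)$ is independent. So if $\Delta\ge\sqrt{n\log n}$, the neighbourhood of a vertex of maximum degree already gives the claim with $c_0=1$. From now on assume $d\le\Delta<\sqrt{n\log n}$.

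\textbf{Step 2: small average degree.} If $d\le n^{1/4}$, greedy selection (or Tur\'an's bound) gives
\[
\alpha(G)\ge \frac{n}{d+1}\ge \tfrac12 n^{3/4},
\]
which exceeds $\sqrt{n\log n}$ for large $n$.

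\textbf{Step 3: the intermediate range $n^{1/4}<d<\sqrt{n\log n}$.} Here I would invoke Shearer's average-degree bound $\alpha(G)\ge n f(d)$, where
\[
f(x)=\frac{x\log x-x+1}{(x-1)^2}=(1+o(1))\frac{\log x}{x}.
\]
Since $\log d\ge\frac14\log n$ and $d<\sqrt{n\log n}$,
\[
\alpha(G)\ge(1-o(1))\frac{n\log d}{d}\ge \frac{n\cdot\frac14\log n}{2\sqrt{n\log n}}\ge \tfrac18\sqrt{n\log n}.
\]
Combining the three cases gives the theorem with $c_0=1/8$.

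\textbf{Step 4: proving Shearer's average-degree bound.} This is the only substantive part and the main obstacle; the case analysis above is routine. I would argue by induction on $n$ using Shearer's deletion argument. The function $f$ satisfies
\[
(x+1)f(x)=1+(x-x^2)f'(x),
\]
is convex and decreasing, and has $f(0)=1$.

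For a vertex $v$, delete its closed neighbourhood $N[v]$ and put $v$ into the independent set. The vertices removed are $v$ and its $d(v)$ neighbours. The edges lost are $\sum_{u\in N(v)}d(u)$, and triangle-freeness guarantees none of them is counted twice.

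Now choose $v$ at random with probability proportional to $1$, or average over $v$. The expected drop in $\sum d$ is controlled by $\sum_v d(v)^2/n\ge d^2$. The key inequality to verify is that the mean-value bound $f(d')\ge f(d)+f'(d)(d'-d)$ (convexity), applied to the remaining graph with average degree $d'$, gives
\[
1+(n-d(v)-1)f(d')\ge n f(d)
\]
for a suitably chosen $v$. The differential identity for $f$ makes this balance exactly. I would first check this computation directly; if it proved delicate, I would fall back on the weaker Ajtai--Koml\'os--Szemer\'edi bound $\alpha\ge c\,n\log d/d$, which still yields Step 3 with a smaller constant.
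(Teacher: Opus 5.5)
The paper gives no proof of this statement. It imports it from Shearer~\cite{Shearer} as a black box, so your proposal is a self-contained reconstruction of the cited result rather than a competitor to an argument in the paper, and it is correct.

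Your case split validly reduces the $\sqrt{n\log n}$ form to Shearer's average-degree bound $\alpha(G)\ge nf(d)$. Step~4 is precisely Shearer's inductive deletion argument, and the computation you left to check does close. For each $v$, write $d'_v$ for the average degree of $G-N[v]$. Convexity of $f$ gives
\[
1+(n-d(v)-1)f(d'_v)\ \ge\ 1+(n-d(v)-1)f(d)+f'(d)\Bigl[(d(v)+1)d-2\sum_{u\in N(v)}d(u)\Bigr].
\]
Now average over $v$. You have $\frac1n\sum_v\sum_{u\in N(v)}d(u)=\frac1n\sum_u d(u)^2\ge d^2$, and $f'\le 0$, so the bracket averages to at most $d-d^2$ and the product term is at least $(d-d^2)f'(d)$. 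Hence the average is at least
\[
nf(d)+1-(d+1)f(d)+(d-d^2)f'(d)=nf(d)
\]
by the differential identity. Some $v$ therefore attains this, and induction on $G-N[v]$ finishes.

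The convexity and monotonicity of $f$ that you assert are easy to justify from the representation $f(x)=\int_0^1\frac{1-t}{1-t+xt}\,dt$: each integrand is convex and decreasing in $x$.

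Your Step~2 is unnecessary. Since $f$ is decreasing, every $d<\sqrt{n\log n}$ already gives $nf(d)\ge nf(\sqrt{n\log n})=(\tfrac12-o(1))\sqrt{n\log n}$, which also improves your constant.

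The paper's citation buys brevity. Your version buys self-containment at the cost of verifying the analytic properties of $f$.
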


Using Theorem~\ref{thm:inverse-ramsey}, we now establish the following bound on the oriented chromatic number.

\begin{lemma}\label{lem:delete-independent}
 There is an absolute constant $C_0$ such that every oriented triangle-free graph $D$ on $n$ vertices satisfies
$\dich(D)\le C_0\sqrt{\frac n{\log n}}$.
\end{lemma}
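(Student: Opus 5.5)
We colour $D$ greedily: repeatedly remove a large independent set of the underlying graph and give it a fresh colour. Each colour class is independent in $U(D)$, hence acyclic in $D$, so the number of classes bounds $\dich(D)$. Induced subgraphs of a triangle-free graph are triangle-free, so Theorem~\ref{thm:inverse-ramsey} applies at every stage as long as the remaining graph has at least $n_0$ vertices. Once fewer than $\max\{n_0,\sqrt{n/\log n}\}$ vertices remain, we colour each of them with its own colour.

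To count the classes, we run the process in dyadic phases. Phase $j$ lasts while the number $m$ of remaining vertices lies in $(n/2^{j+1},n/2^j]$. During this phase each step removes at least $c_0\sqrt{m\log m}\ge c_0\sqrt{(n/2^{j+1})\log(n/2^{j+1})}$ vertices. The phase starts with at most $n/2^j$ vertices, so it uses at most
\[
\frac{n/2^j}{c_0\sqrt{(n/2^{j+1})\log(n/2^{j+1})}}+1
\]
steps. We only run phases while $n/2^{j+1}\ge\sqrt{n}$. In that range $\log(n/2^{j+1})\ge\frac12\log n$, so phase $j$ costs $O\bigl(2^{-j/2}\sqrt{n/\log n}\bigr)+1$ colours. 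Summing the geometric series over $j$ gives $O(\sqrt{n/\log n})$. The additive $+1$ terms contribute $O(\log n)$, which is negligible.

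After the last phase fewer than about $2\sqrt n$ vertices remain, and giving them individual colours costs too much. Instead we continue the same reasoning all the way down to size $\max\{n_0,\sqrt{n/\log n}\}$. At size $m$ the cost is then bounded by the single estimate $\sum_{\text{phases}}\sqrt{m_j/\log m_j}$, where $m_j=n/2^j$. Using $m\ge\sqrt{n/\log n}\ge n^{1/3}$ for large $n$, we get $\log m\ge\frac13\log n$, so the same geometric bound holds. The final at most $\max\{n_0,\sqrt{n/\log n}\}$ singleton colours add $O(\sqrt{n/\log n})$. Absorbing small $n$ into $C_0$ finishes the proof.

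The only delicate point is keeping $\log m$ comparable to $\log n$ throughout. The dyadic cutoff at a polynomial size $n^{1/3}$ handles this.
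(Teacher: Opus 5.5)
Your proof is correct and is essentially the paper's argument: greedy removal of Shearer-sized independent sets, with dyadic phases whose costs $O(2^{-j/2}\sqrt{n/\log n})+1$ sum geometrically. The only difference is the tail. You stop once fewer than $\max\{n_0,\sqrt{n/\log n}\}$ vertices remain and give each a singleton colour, which keeps $\log m\ge\frac13\log n$ throughout. The paper instead runs the process down to fewer than $2n_0$ vertices and bounds each of the $O(\log n)$ phases with $m=O(\sqrt n)$ crudely by $O(n^{1/4})$.
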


\begin{proof}
Every independent set in $U(D)$ is acyclic in $D$. Repeatedly remove an independent set of maximum size from the underlying graph of the remaining digraph, assigning a new colour to each removed set.

Enlarge $n_0$ if necessary so that $n_0\ge \mathrm e^2$, and let $J=\lfloor\log_2(n/n_0)\rfloor$. For $0\le j\le J-1$, consider the stage during which the number $m$ of remaining vertices lies in
$\left(\frac{n}{2^{j+1}},\frac{n}{2^j}\right]$.
At every removal in this stage, Theorem~\ref{thm:inverse-ramsey} supplies an independent set of size at least
$c_0\sqrt{\frac{n}{2^{j+1}}\log\!\left(\frac{n}{2^{j+1}}\right)}$.
Consequently, using $n/2^j$ as an upper bound on the total number of vertices removed during the stage, the number of colours used there is at most $\frac{\sqrt{2}}{c_0}\sqrt{\frac{n/2^j}{\log(n/2^{j+1})}}+1$.
After these stages, fewer than $n/2^J<2n_0$ vertices remain, costing at most $2n_0$ additional colours.

Let $J_1=\lfloor(\log_2 n)/2\rfloor-2$. For $j\le J_1$, we have $\log(n/2^{j+1})\ge\frac13\log n$ for all sufficiently large $n$. Summing the per-stage bound over these $j$ gives
\[
 O\!\left(\sqrt{\frac n{\log n}}
          \sum_{j\ge0}2^{-j/2}\right)
 =O\!\left(\sqrt{\frac n{\log n}}\right).
\]
For $j>J_1$, the number of remaining vertices is at most $O(\sqrt n)$. There are $O(\log n)$ such stages, and each per-stage term is $O(n^{1/4}+1)$. Their total is
$O(n^{1/4}\log n)=o\!\left(\sqrt{\frac n{\log n}}\right)$.
This proves the lemma.
\end{proof}

\begin{proof}[Proof of Theorem~\ref{thm:extremal}]
The lower bound on $\amin(n)$ follows because every independent set in the underlying triangle-free graph is acyclic, followed by Theorem~\ref{thm:inverse-ramsey}. The upper bound on $\amin(n)$ follows from Theorem~\ref{thm:single}.

Lemma~\ref{lem:delete-independent} gives the upper bound on $\tmax(n)$. For the lower bound, choose an orientation $D_n$ supplied by Theorem~\ref{thm:single} and apply the bound $\dich(D_n)\ge n/\acyc(D_n)$:
\[
    \tmax(n)\ge\dich(D_n)
    \ge\frac{n}{\acyc(D_n)}
    =\Omega\!\left(\sqrt{\frac n{\log n}}\right).
\]
\end{proof}

The hereditary statement in Theorem~\ref{thm:single} has the following useful vertex-robustness consequence.

\begin{corollary}\label{cor:vertex-robustness}
Fix $\zeta\in(0,1/2)$ and $\lambda>0$. The graphs $G_n$ in Theorem~\ref{thm:single} have the following property with probability tending to one under uniform random orientation: for every $W\subseteq V(G_n)$ with $|W|\ge\lambda n$,
$\dich(D_n[W])\ge c_{\lambda,\zeta}\sqrt{\frac n{\log n}}$.
In particular, deleting any set of at most $(1-\lambda)n$ vertices preserves dichromatic number of the optimal order $\sqrt{n/\log n}$.
\end{corollary}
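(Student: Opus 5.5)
The plan is to deduce Corollary~\ref{cor:vertex-robustness} directly from part~(ii) of Theorem~\ref{thm:single}, without any new probabilistic input. Let $k_n\le C\sqrt{n\log n}$ be the integer from Theorem~\ref{thm:single}, where $C=C(\zeta)$. Let $\mathcal G_n$ be the complement of the exceptional event there. We have
\[
\Pp(\mathcal G_n)\ge 1-\exp\!\left[-c\sqrt n\,(\log n)^{3/2}\right],
\]
which tends to one. So it suffices to show that on $\mathcal G_n$ every $W$ with $|W|\ge\lambda n$ has large dichromatic number.

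First, for $n$ large we have $\lambda n\ge C\sqrt{n\log n}\ge k_n$, since $\sqrt{n\log n}=o(n)$. Hence every $W$ with $|W|\ge\lambda n$ satisfies $|W|\ge k_n$. On $\mathcal G_n$, part~(ii) of Theorem~\ref{thm:single} then gives
\[
\dich(D_n[W])\ge\frac{|W|}{k_n-1}\ge\frac{\lambda n}{C\sqrt{n\log n}}=\frac{\lambda}{C}\sqrt{\frac n{\log n}}.
\]
We may therefore take $c_{\lambda,\zeta}=\lambda/C(\zeta)$. For the finitely many small $n$, or if one wants the statement for all $n$, shrink the constant or simply note that the claim concerns $n\to\infty$. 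The ``in particular'' clause is immediate: deleting at most $(1-\lambda)n$ vertices leaves a set $W$ with $|W|\ge\lambda n$.

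Finally, the order $\sqrt{n/\log n}$ is optimal. By Lemma~\ref{lem:delete-independent}, every oriented triangle-free graph on $n$ vertices, and hence every induced subdigraph $D_n[W]$, has dichromatic number $O(\sqrt{n/\log n})$. There is no real obstacle in this argument. The only point needing care is the threshold comparison $\lambda n\ge k_n$, which holds once $n$ is large in terms of $\lambda$ and $\zeta$.
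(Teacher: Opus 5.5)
Your proposal is correct and follows the paper's argument: for large $n$ you have $\lambda n\ge k_n$, so part~(ii) of Theorem~\ref{thm:single} applies to every such $W$, and then $k_n\le C_\zeta\sqrt{n\log n}$ gives $\dich(D_n[W])\ge(\lambda/C_\zeta)\sqrt{n/\log n}$. Your additional remark that Lemma~\ref{lem:delete-independent} shows this order is optimal is a correct supplement to what the paper writes.
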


\begin{proof}
For sufficiently large $n$, $\lambda n\ge k_n$. Apply the hereditary bound~(ii) of Theorem~\ref{thm:single} and use $k_n\le C_\zeta\sqrt{n\log n}$.
\end{proof}

\section{Packing robustly oriented triangle-free graphs}

The construction and coverage argument follow Guo and Warnke~\cite[Theorem~5]{GuoWarnke}; the new point is that every packed graph, after random orientation, is simultaneously almost maximally far from acyclic in every sufficiently large induced subdigraph.

\begin{proof}[Proof of Theorem~\ref{thm:packing}]
If $\xi>1$, then the hypothesis is vacuous, so assume $0<\xi\le1$. Replacing $\eps$ by $\min\{\eps,1/2\}$ only strengthens the conclusion, so assume $0<\eps\le1/2$.
Let $\kappa=1-\hbin\!\left(\frac12-\zeta\right)>0$ and $\beta_0,R_0$ be the constants in Theorem~\ref{thm:GW}. Fix
$\delta=\tfrac14, \beta=\tfrac{\beta_0}{2}$ and $\gamma=\eps^2\xi$.
Choose a constant $L$ such that $L\ge \min\left\{C_0, \frac{R_0}{\delta^2\sqrt{\beta\gamma}},
 \frac{8}{\kappa(1-\delta)\gamma\sqrt\beta\,\log2}\right\}$.
For all sufficiently large $n$, define $\rho=\sqrt{\tfrac{\beta\log n}{n}}, s=\left\lceil L\sqrt{n\log n}\right\rceil$ and $k=2s$.
Increase $n_0$ if necessary so that $k\le n$, and let
$I=\left\lceil\frac{\log(1/\eps)}{(1-\delta)\rho}\right\rceil$.
Thus
 $I=\left\lceil R\sqrt{\frac n{\log n}}\right\rceil$ and
 $R:=\frac{\log(1/\eps)}{(1-\delta)\sqrt\beta}$.
We construct $I$ graphs and then let $r=I$.

Let $t=\left\lceil C_0\sqrt{n\log n}\right\rceil$.
Since $s\ge t$, Lemma~\ref{lem:scale-lift} implies that $H$ is $(\xi,s)$-lower-uniform. Let $H_0=H$. We inductively construct spanning triangle-free graphs
$G_i\subseteq H_i$ and  $H_{i+1}=H_i\setminus G_i$ where $0\le i<I$,
so that, for every pair of disjoint $s$-vertex sets $A,B$,
\[
 \bigl(1-(1+\delta)\rho\bigr)^i e_H(A,B)
 \le e_{H_i}(A,B)
 \le \bigl(1-(1-\delta)\rho\bigr)^i e_H(A,B).
\]
The assertion is trivial for $i=0$.

Suppose $0\le i<I$ and $H_i$ has been constructed. Since $\rho\to0$, for all sufficiently large $n$ we have $1-(1+\delta)\rho\ge \exp(-(1+2\delta)\rho)$. Indeed, the function $\log(1-(1+\delta)x)+(1+2\delta)x$ is positive for all sufficiently small positive $x$, because it vanishes at $0$ and has derivative $\delta$ there. Since $i\le I-1$, the definition of $I$ gives
 $i<\frac{\log(1/\eps)}{(1-\delta)\rho}$.
Using the invariant, the preceding inequality, the identity
$\frac{1+2\delta}{1-\delta}=2$ for $\delta=\frac14$,
and the lower-uniformity of $H$, we obtain
\begin{align*}
 e_{H_i}(A,B)
 &\ge \bigl(1-(1+\delta)\rho\bigr)^i e_H(A,B)\\
 &\ge \exp\bigl(-(1+2\delta)\rho i\bigr)e_H(A,B)\\
 &\ge \eps^2 e_H(A,B)\\
 &\ge \eps^2\xi s^2
 =\gamma s^2
\end{align*}
for every disjoint $s$-vertex pair $A,B$.

We may therefore apply Theorem~\ref{thm:GW} to the host graph $H_i$ with the parameters above. It yields a spanning triangle-free graph $G_i\subseteq H_i$ such that $e_{G_i}(A,B)=(1\pm\delta)\rho e_{H_i}(A,B)$ for every pair of disjoint $s$-vertex sets $A,B$. Since $H_{i+1}=H_i\setminus G_i$, this gives
\[
 \bigl(1-(1+\delta)\rho\bigr)e_{H_i}(A,B)
 \le e_{H_{i+1}}(A,B)
 \le \bigl(1-(1-\delta)\rho\bigr)e_{H_i}(A,B),
\]
which proves the invariant for $i+1$. The $G_i$ are edge-disjoint by construction.

\noindent\emph{Uniform feedback-arc robustness after random orientation.}
For every $i<I$ and every pair of disjoint $s$-vertex sets $A,B$, the preceding lower bound on $H_i$ together with $e_{G_i}(A,B)=(1\pm\delta)\rho e_{H_i}(A,B)$ imply
\[
    e_{G_i}(A,B)\ge(1-\delta)\rho\gamma s^2.
\]
Let $q=(1-\delta)\gamma\rho$.
Since $k-1\ge s$, the last condition on $L$ gives
\[
    \kappa q(k-1)
    \ge\kappa(1-\delta)\gamma\rho s
    \ge8\log_2 n.
\]
Theorem~\ref{thm:transfer} applies to every $G_i$. Orient the edges of all $G_i$ independently and uniformly, obtaining $D_i$. A union bound over $i$ yields
\[
 \Pp\!\left(\exists i<I,\ \exists U,\ |U|\ge k:\ \fas(D_i[U])\le\left(\tfrac12-\zeta\right)e(G_i[U])\right)\le2I n^{-3k}.
\]
Now $I=O_\eps(\sqrt{n/\log n})$ and $k\ge2L\sqrt{n\log n}$. Hence, for some $c=c(\eps,\xi,C_0,\zeta)>0$ and all sufficiently large $n$,
\[
 2I n^{-3k}
 \le \exp\!\left[-c\sqrt n\,(\log n)^{3/2}\right].
\]
Also $k\le3L\sqrt{n\log n}$ for large $n$. This is the probability bound of Theorem~\ref{thm:packing}, with $k_n=k$ and $C=3L$.

On the good event, no $D_i$ contains an acyclic set of order $k$. Therefore every acyclic set in $D_i[W]$ has order at most $k-1$, and the hereditary dichromatic bound follows from the pigeonhole principle.

\noindent\emph{Edge coverage.}
From the upper bound in the invariant and the definition of $I$,
\begin{align*}
 e_{H_I}(A,B)
 &\le\bigl(1-(1-\delta)\rho\bigr)^I e_H(A,B)\\
 &\le\exp\bigl(-(1-\delta)\rho I\bigr)e_H(A,B)\\
 &\le\eps e_H(A,B)
\end{align*}
for every pair of disjoint $s$-vertex sets $A,B$. Let
\[
    F=H\setminus H_I=\bigcup_{i=0}^{I-1}G_i.
\]
Then
\[
    e_F(A,B)\ge(1-\eps)e_H(A,B)
\]
for all such pairs. Lemma~\ref{lem:edge-double-count} yields
$e(F)\ge(1-\eps)e(H)$.
Because the $G_i$ are edge-disjoint,
$e(F)=\sum_{i=0}^{I-1}e(G_i)$,
which proves the coverage bound $\sum_i e(G_i)\ge(1-\eps)e(H)$ and completes the proof.
\end{proof}

\begin{proof}[Proof of Corollary~\ref{cor:complete-packing}]
Apply Theorem~\ref{thm:packing} with $H=K_n$, $\xi=1$, and any fixed $C_0>0$. The lower-uniformity assumption is automatic, and all conclusions follow directly.
\end{proof}

\begin{proof}[Proof of Corollary~\ref{cor:random-host}]
Let $t=\left\lceil\sqrt{n\log n}\right\rceil$.
Fix disjoint $t$-vertex sets $A,B$. In $H\sim G(n,p)$, the random variable $e_H(A,B)$ has distribution $\operatorname{Bin}(t^2,p)$. A standard Chernoff bound gives
\[
    \Pp\!\left(e_H(A,B)<\frac p2t^2\right)
    \le\exp\!\left(-\frac p8t^2\right).
\]
There are at most $3^n$ ordered pairs of disjoint vertex sets. Hence
\[
 \Pp\!\left(
   \exists A,B,\ |A|=|B|=t:\
   e_H(A,B)<\frac p2t^2
 \right)
 \le3^n\exp\!\left(-\frac p8t^2\right)=o(1),
\]
because $t^2\ge n\log n$. Thus $H$ is $(p/2,t)$-lower-uniform with probability $1-o(1)$. Also, by the ordinary Chernoff bound,
$e(H)=(1+o(1))p\binom n2$
with probability $1-o(1)$.

On this event, apply Theorem~\ref{thm:packing} with $\xi=p/2$ and $C_0=1$. This gives the deterministic packing inside the realised host $H$, and the conditional probability estimate for independent random orientations. Combining the two high-probability events proves the corollary.
\end{proof}

The complete-graph packing can also be viewed inside one random tournament.

\begin{corollary}\label{cor:random-tournament}
For every $\eps>0$ and $\zeta\in(0,1/2)$ there are constants $C,R,c>0$ such that, for all sufficiently large $n$, there is a deterministic edge-disjoint family
\[
    G_0,\ldots,G_{r-1}\subseteq K_n,
    \quad
    r=\left\lceil R\sqrt{\frac n{\log n}}\right\rceil,
\]
of spanning triangle-free graphs covering at least $(1-\eps)\binom n2$ edges, and an integer $k_n\le C\sqrt{n\log n}$, with the following property. If $T_n$ is a uniformly random tournament on the same vertex set and $D_i$ is the restriction of $T_n$ to $G_i$, then
\[
 \Pp\!\left(\exists i<m,\ \exists U,\ |U|\ge k_n:\ \fas(D_i[U])\le\left(\tfrac12-\zeta\right)e(G_i[U])\right)\le\exp\!\left[-c\sqrt n\,(\log n)^{3/2}\right].
\]
In particular, with high probability every $i<m$ and every $W$ with $|W|\ge k_n$ satisfy $\dich(D_i[W])\ge\frac{|W|}{k_n-1}$.
\end{corollary}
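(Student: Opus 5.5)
The plan is to reduce Corollary~\ref{cor:random-tournament} directly to Corollary~\ref{cor:complete-packing}, that is, to Theorem~\ref{thm:packing} with $H=K_n$, $\xi=1$ and any fixed $C_0>0$. Here $r$ plays the role of the index bound written as $m$ in the displayed probability. Theorem~\ref{thm:packing} supplies a deterministic edge-disjoint family $G_0,\ldots,G_{r-1}$ of spanning triangle-free subgraphs of $K_n$ with $r=\lceil R\sqrt{n/\log n}\rceil$. The family covers at least $(1-\eps)e(K_n)=(1-\eps)\binom n2$ edges. Theorem~\ref{thm:packing} also gives an integer $k_n\le C\sqrt{n\log n}$ and a constant $c>0$ such that independent uniform orientations of the $G_i$ fail the feedback-arc property with probability at most $\exp[-c\sqrt n(\log n)^{3/2}]$.

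The only point to check is that restricting a uniformly random tournament reproduces exactly the random model of Theorem~\ref{thm:packing}. A uniformly random tournament $T_n$ orients each edge of $K_n$ independently and uniformly. Because the $G_i$ are pairwise edge-disjoint, the restrictions $D_i=T_n|_{G_i}$ depend on disjoint sets of independent coin flips. Hence $(D_0,\ldots,D_{r-1})$ is distributed as independent uniform orientations of $G_0,\ldots,G_{r-1}$. The edges of $K_n$ not covered by any $G_i$ are simply ignored. The exceptional event is a function of $(D_i)_i$ alone, so its probability is the same as in Theorem~\ref{thm:packing}, and the displayed bound follows.

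For the dichromatic conclusion, note that on the complement of the exceptional event no $D_i$ has an acyclic set of order $k_n$. Any such set would give $\fas=0\le(\frac12-\zeta)e(G_i[U])$ with $|U|=k_n$. Therefore every acyclic set of $D_i[W]$ has size at most $k_n-1$, and pigeonhole gives $\dich(D_i[W])\ge|W|/(k_n-1)$ for all $|W|\ge k_n$. Since the failure probability tends to zero, this holds with high probability.

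I expect no real obstacle. The only care needed is the independence and coupling observation for edge-disjoint restrictions, together with matching the constants $C,R,c$ to those of Theorem~\ref{thm:packing}, which depend only on $\eps$ and $\zeta$ once $\xi=1$ and $C_0$ are fixed.
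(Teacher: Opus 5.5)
Your proposal is correct and follows the paper's proof essentially verbatim. You apply Theorem~\ref{thm:packing} to $H=K_n$ with $\xi=1$ and a fixed $C_0$, keep the deterministic packing, and use edge-disjointness to see that the restrictions of a uniform tournament are independent uniform orientations of the $G_i$; your reading of the index bound $m$ as $r$ and your derivation of the dichromatic bound from the good event are also exactly what is intended.
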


\begin{proof}
Apply Theorem~\ref{thm:packing} to $H=K_n$ with $\xi=1$ and any fixed $C_0>0$, and retain the resulting deterministic undirected packing. A uniformly random tournament is obtained by orienting all edges of $K_n$ independently and uniformly. Since the $G_i$ are edge-disjoint, the restrictions to their edge sets are independent uniform orientations. The conclusion is therefore exactly the probability estimate in Theorem~\ref{thm:packing}.
\end{proof}

\section{Concluding remarks}

The feedback-arc conclusion is first-order best possible: Proposition~\ref{prop:fas-orders} gives $\fas(D[U])\le e(G[U])/2$ for every orientation, while Theorems~\ref{thm:single} and~\ref{thm:packing} achieve $(1/2-\zeta)e(G[U])$ simultaneously in every sufficiently large induced subdigraph. The proof uses deliberately wasteful constants. The scale constant supplied by the Guo--Warnke theorem is enlarged until the entropy gap
$1-\hbin\!\left(\frac12-\zeta\right)$ dominates the choices of a vertex set and an ordering.

The Guo--Warnke construction is algorithmic, and the subsequent random orientation is immediate. Consequently, the graphs and orientations in our main theorems can be sampled by a randomized polynomial-time procedure with the stated success probability, using the algorithmic version of their theorem.

We conclude with three natural questions.

\begin{question}
Does a uniformly random orientation of the terminal graph of the triangle-free process satisfy the same uniform feedback-arc conclusion as Theorem~\ref{thm:single}?
\end{question}

\begin{question}
What are the optimal leading constants in $\amin(n)$ and $\tmax(n)$? In particular, can one match the inverse-Ramsey constant in the lower bound for $\amin(n)$?
\end{question}

\begin{question}
Can $K_n$ be completely decomposed, up to divisibility obstructions, into spanning oriented triangle-free graphs for which every sufficiently large induced subdigraph has feedback arc set number $(1/2-o(1))$ times its number of arcs?
\end{question}

The last question is a robust oriented strengthening of the complete-decomposition problem posed by Guo and Warnke for nearly optimal Ramsey graphs.


\begin{thebibliography}{99}

\bibitem{AboulkerHPS}
P.~Aboulker, F.~Havet, F.~Pirot, and J.~Schabanel,
\newblock Minimum acyclic number and maximum dichromatic number of oriented triangle-free graphs of a given order,
\newblock \emph{Electron. J. Combin.} \textbf{32} (2025), no.~4, Paper~P4.27.

\bibitem{AKS}
M.~Ajtai, J.~Koml\'os, and E.~Szemer\'edi,
\newblock A note on Ramsey numbers,
\newblock \emph{J. Combin. Theory Ser. A} \textbf{29} (1980), 354--360.

\bibitem{BohmanKeevash}
T.~Bohman and P.~Keevash,
\newblock Dynamic concentration of the triangle-free process,
\newblock \emph{Random Structures Algorithms} \textbf{58} (2021), 221--293.

\bibitem{FizGriffithsMorris}
G.~Fiz Pontiveros, S.~Griffiths, and R.~Morris,
\newblock The triangle-free process and the Ramsey number $R(3,k)$,
\newblock \emph{Mem. Amer. Math. Soc.} \textbf{263} (2020), no.~1274.

\bibitem{GuoWarnke}
H.~Guo and L.~Warnke,
\newblock Packing nearly optimal Ramsey $R(3,t)$ graphs,
\newblock \emph{Combinatorica} \textbf{40} (2020), 63--103.

\bibitem{Kim}
J.~H.~Kim,
\newblock The Ramsey number $R(3,t)$ has order of magnitude $t^2/\log t$,
\newblock \emph{Random Structures Algorithms} \textbf{7} (1995), 173--207.

\bibitem{NeumannLara}
V.~Neumann-Lara,
\newblock The dichromatic number of a digraph,
\newblock \emph{J. Combin. Theory Ser. B} \textbf{33} (1982), 265--270.

\bibitem{Shearer}
J.~B.~Shearer,
\newblock A note on the independence number of triangle-free graphs,
\newblock \emph{Discrete Math.} \textbf{46} (1983), 83--87.

\end{thebibliography}
\end{document}